\newtheorem{theorem}{Theorem}[section]
\newtheorem{corollary}{Corollary}[section]
\newtheorem{definition}{Definition}[section]
\newtheorem{lemma}{Lemma}[section]
\newenvironment{proof}[1][Proof]{\textbf{#1.} }{\ \rule{0.5em}{0.5em}}
\newcommand{\abs}[1]{\left\lvert#1\right\rvert}
\newcommand{\norm}[1]{\parallel #1 \parallel}
\newcommand{\C}{\mathbb{C}}
\newcommand{\Q}{\mathbb{Q}}
\renewcommand{\P}{\mathbb{P}}
\newcommand{\R}{\mathbb{R}}
\newcommand{\Z}{\mathbb{Z}}
\newcommand{\K}{\mathbb{K}}
\newcommand{\N}{\mathbb{N}}
\newcommand{\po}{\textrm{P}}
\newcommand{\Dp}[2]{\dfrac{\partial {#1}}{\partial{#2}}}
\newcommand{\var}{\varepsilon}
\newcommand{\lda}{\lambda}
\newcommand{\bs}{\bigskip}
\newcommand{\ms}{\medskip}
\newcommand{\sk}{\smallskip}
\newcommand{\hr}{\hrulefill}
\newcommand{\Dint}{\displaystyle{\int}}
\newcommand{\dint}{\displaystyle{\int}}
\newcommand{\vsq}{\vspace*{-0.325cm}}
\newcommand{\Inf}[1]{\displaystyle{\inf_{#1}}}
\newcommand{\Sup}[1]{\displaystyle{\sup_{#1}}}
\newcommand{\Max}[1]{\displaystyle{\max_{#1}}}
\newcommand{\Min}[1]{\displaystyle{\min_{#1}}}
\newcommand{\Sum}[2]{\displaystyle{\sum_{#1}^{#2}}}
\newcommand{\Int}[2]{\displaystyle{\int_{#1}^{#2}}}
\newcommand{\IInt}[1]{\displaystyle{\iint_{\!#1}}}
\newcommand{\dd}{\textrm{\hspace{0.05cm}d}}
\newcommand{\ce}{\mbox{\textrm{\, c}}}
\newcommand{\ov}[1]{\overrightarrow{#1\ }}
\newcommand{\Prod}[2]{\displaystyle{\prod_{#1}^{#2}}}
\newcommand{\absolue}[1]{\left| #1 \right|}
\newcommand{\Dlim}[2]{\displaystyle{\lim_{#1\rightarrow #2}}\,}
\newcommand{\di}{\displaystyle{\lim_{n\rightarrow +\infty}}\:}
\newcommand{\lz}{\displaystyle{\lim_{x\rightarrow 0}} \ }
\newcommand{\fonc}[5]{#1 : \begin{cases}#2 \rightarrow #3 \\ #4 \mapsto #5
 \end{cases}}
\newtheorem{PB}{Probl\`{e}me}
\newtheorem{EX}{Exercise}
\newtheorem{EXC}{Exercice}
\newtheorem{so}{Exercice}
\newtheorem{sop}{Probl\`{e}me}
\begin{document}
\begin{center}
Labeled Packing of Non Star Tree into its Fifth Power and Sixth
Power
\end{center}
\begin{center} Amine El Sahili - Hamamache Kheddouci - Maidoun
Mortada\end{center}
\begin{abstract}
 In this paper we prove that we can find a labeled packing of a non star tree $T$ into  $T^6$ with
  $m_T+\lceil\frac{n-m_T}{5}\rceil$ labels, where $n$ is the number of vertices of $T$ and $m_T$ is the maximum
 number of leaves that can be removed from $T$ in such a way that the obtained graph is a non star tree. Also, we prove that we can
 find a labeled packing of a non star tree $T$ into  $T^5$ with $m_T+1$
 labels and a labeled packing of
 a path $P_n$, $n\geq 4$, into $P_n^4$ with $\lceil \frac{n}{4}\rceil$
 labels.\end{abstract}
\section{Introduction}
\indent All graphs considered in this paper are finite and
undirected. For a graph $G$,  $V(G)$ and $E(G)$ will denote its
vertex set and edge set, respectively. We denote by $N_G(x)$ the set
of the neighbors of the vertex $x$ in G.
 The degree $d_G(x)$ of the vertex $x$ in $G$ is the cardinality of the
set $N_G(x)$. For short, we use $d(x)$ instead of $d_G(x)$ and
$N(x)$ instead of $N_G(x)$. The distance between two vertices of
$G$, say $x$ and $y$, is denoted by $dist_G(x,y)$, and for short we
usually use $dist(x,y)$.  For a subset $U$ of $V$, we denote by
$G-U$ the graph  obtained from $G$ by deleting all the vertices in
$U\cap V$ and their incident edges. For a subset $F$ of $E$, we
write $G-F:=(V,E\setminus F)$.\\\\ A vertex of degree one in a tree
$T$ is called a leaf and the neighbor of a leaf is its father. For a
non star tree $T$, we denote by $m_T$ the maximum number of leaves
that can be removed from $T$ in such a way that the obtained tree is
a non star one. The number of edges of a path $P$ is its length
$l(P)$. A path on $n$ vertices is denoted by $P_n$. The middle
vertex of $P_5$ will be called a bad vertex.
\\
\\
\noindent Let $G$ be a graph of order $n$. Consider a permutation
$\sigma: V (G)\rightarrow V (K_n)$, the map $\sigma^*:
E(G)\rightarrow E(K_n)$ such that $\sigma^*(xy)=\sigma(x)\sigma(y)$
is the map induced by $\sigma$. We say that there is a packing of
$k$ copies of $G$ (into the complete graph $K_n$) if there exist
permutations $\sigma_i : V (G)\rightarrow V (K_n)$, where $i =
1,..., k$, such that $\sigma^*_i (E(G)) \cap \sigma^*_j (E(G))
=\phi$ for $i \neq j$. A packing of $k$ copies of a graph $G$ will
be called a $k$-placement of $G$. A packing of two copies of $G$
(i.e. a 2-placement) is also called an embedding of $G$ (into its
complement $\bar{G}$). That is, we say that $G$ can be embedded in
its complement if there exists a permutation $\sigma$ on $V (G)$
such that if an edge $xy$ belongs to $E(G)$, then
$\sigma(x)\sigma(y)$ does not belong to $E(G).$
 A permutation
$\sigma$ on $V(G) $ such that $\sigma(x)\neq x$ for every $x$ in $V
(G)$ is called a
fixed point free permutation. \\
\\ \noindent The problem of embedding paths
and trees in their complements has long been one of the fundamental
questions in combinatorics that has been considerably investigated
[2,4,5,6,7,8]. For recent results and survey on this field, we refer
to the survey
papers of Wozniak [9] and Yap [10].\\
Concerning non star trees, the following theorem was proved by
Straight (unpublished, cf. [3])
\begin{theorem}
Let $T$ be a non star tree, then $T$ is contained in its own
complement.
\end{theorem} This result has been improved in many ways especially in considering some additional
information and conditions about embedding. An example of such a
result is the following theorem contained as a lemma in [9]:
 \begin{theorem} Let $T$ be a non-star tree of order $n$
with $n>3$. Then there exists a 2-placement $\sigma$ of $T$ such
that for every $x \in  V(T)$, $dist (x, \sigma(x))\leq 3$.
\end{theorem}
 This theorem immediately implies the following:
\begin{corollary}
 Let $T$ be a non-star tree of order $n$ with $n>3$. Then
there exists an embedding of  $T$ such that $\sigma(T)\subseteq
T^7$.
\end{corollary}
In [6], Kheddouci \emph{et al.} gave a better improvement in the
following theorem:
\begin{theorem}
Let $T$ be a non star tree and let $x$ be a vertex of $T$. Then,
there exists a permutation $\sigma$ on $V(T)$ satisfying the
following four conditions: \\
1. $\sigma$ is a 2-placement of $T$.\\
2. $\sigma(T)\subseteq T^4$.\\
3. $dist(x,\sigma(x)) = 1$.\\
4. for every  neighbor $y$ of $x$, $dist(y,\sigma(y))\leq 2$.
\end{theorem}
Labeled graph packing is a well known field of graph theory that has
been considerably investigated. It is intorduced by E. Duchene and
H. Kheddouci. Below is the definition of the labeled packing
problem:
\begin{definition}. Consider a graph $G$. Let $f$ be a mapping from $V (G)$ to the set $\{1, 2,... ,
p\}$. The mapping $f$ is called a p-labeled-packing of $k$ copies of
$G$ into $K_n$ if there exists  permutations $\sigma_i : V (G)
\rightarrow V(K_n)$, where $i = 1,..., k$, such that:\\
1. $\sigma^*_i (E(G)) \cap \sigma^*_j (E(G))=\phi$ ; for all $i \neq j$.\\
2. For every vertex $v$ of $G$, we have
$f(v)=f(\sigma_1(v))=f(\sigma_2(v))=...=f(\sigma_k(v))$. \\
\end{definition}
The maximum positive integer $p$ for which $G$ admits a
$p$-labeled-packing of $k$ copies of $G$ is called the labeled
packing number of $k$ copies of $G$ and is denoted by
$\lambda^k(G)$.  \\\\E. Duchene \emph{et al.} introduced the
following two results which are presented as Lemmas in [1]. These
results give an upper bound for the labeled packing $\lambda^2(G)$.
\begin{theorem}
Let $G$ be a graph of order $n$ and let $I$ be a maximum independent
set of $G$. If there exists an embedding of $G$ into $K_n$, then
\begin{center}
$\lambda^2(G) \leq |I| + \lfloor\frac{ n - |I|}{ 2} \rfloor$
\end{center}
\end{theorem}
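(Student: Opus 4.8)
The plan is to strip an optimal $2$-labeled-packing down to a single embedding $\tau$ of $G$ into $\bar{G}$, observe that the labeling must be constant on the cycles of $\tau$, and then bound the number of cycles using the fact that the fixed points of $\tau$ form an independent set of $G$.

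First I would fix an optimal configuration: permutations $\sigma_1,\sigma_2$ and a labeling $f: V(G)\to\{1,\dots,p\}$ using all $p$ labels, where $p=\lambda^2(G)$, realizing a $p$-labeled-packing. Set $\tau:=\sigma_1^{-1}\circ\sigma_2$, so that $\sigma_1\circ\tau=\sigma_2$ and hence $\sigma_1^{*}\circ\tau^{*}=\sigma_2^{*}$. Then the disjointness relation $\sigma_1^{*}(E(G))\cap\sigma_2^{*}(E(G))=\emptyset$ reads $\sigma_1^{*}(E(G))\cap\sigma_1^{*}(\tau^{*}(E(G)))=\emptyset$, and since $\sigma_1^{*}$ is injective this gives $E(G)\cap\tau^{*}(E(G))=\emptyset$; that is, $\tau$ is an embedding of $G$ into $\bar{G}$. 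Likewise, condition $2$ of the definition of a labeled packing says $f=f\circ\sigma_1=f\circ\sigma_2$, whence $f\circ\sigma_1^{-1}=f$ and therefore $f\circ\tau=(f\circ\sigma_1^{-1})\circ\sigma_2=f$: the labeling $f$ is constant on every cycle of $\tau$. Consequently $f$ factors through the set of cycles of $\tau$, so
\[
\lambda^2(G)=|f(V(G))|\le c(\tau),
\]
where $c(\tau)$ denotes the number of cycles of $\tau$, fixed points included.

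Next I would bound $c(\tau)$. The key observation is that the set $F$ of fixed points of $\tau$ is an independent set of $G$: if $x,y\in F$ and $xy\in E(G)$ then $\tau^{*}(xy)=\tau(x)\tau(y)=xy\in E(G)$, contradicting $E(G)\cap\tau^{*}(E(G))=\emptyset$. Hence $t:=|F|\le|I|$. The remaining $n-t$ vertices are partitioned by $\tau$ into cycles of length at least $2$, so there are at most $\lfloor(n-t)/2\rfloor$ of them, and thus $c(\tau)\le t+\lfloor(n-t)/2\rfloor$. A one-line check shows $t\mapsto t+\lfloor(n-t)/2\rfloor$ is nondecreasing (raising $t$ by $1$ changes $\lfloor(n-t)/2\rfloor$ by $0$ or $-1$), so from $t\le|I|$ we get $c(\tau)\le|I|+\lfloor(n-|I|)/2\rfloor$. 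Combining with the displayed inequality yields the theorem.

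I do not expect a genuine obstacle: the argument is essentially an unwinding of the definitions. The two points needing a little care are the reduction from the pair $(\sigma_1,\sigma_2)$ to the single embedding $\tau=\sigma_1^{-1}\sigma_2$ while preserving the ``constant on cycles'' property of $f$ (a routine computation with the induced edge maps $\sigma^{*}$), and the remark that, among permutations with at most $|I|$ fixed points, the cycle count is largest when the fixed-point set is as big as allowed and all other vertices are paired into transpositions, which is exactly the monotonicity statement above.
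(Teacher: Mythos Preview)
Your argument is correct. The reduction from the pair $(\sigma_1,\sigma_2)$ to the single permutation $\tau=\sigma_1^{-1}\sigma_2$ is clean, the verification that $f$ is $\tau$-invariant is accurate, and the observation that the fixed-point set of any $2$-placement is independent (hence of size at most $|I|$) together with the monotonicity of $t\mapsto t+\lfloor (n-t)/2\rfloor$ gives exactly the stated bound.

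As for comparison with the paper: there is nothing to compare. The paper does not prove this statement; it is quoted as a result of Duch\^ene \emph{et al.} from reference~[1] (where it appears as a lemma) and is only used here as background. Your proof is the standard one and is essentially what one expects the original argument in~[1] to be.
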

\begin{theorem}
Let $G$ be a graph of order  $n$ with a maximum independent set $I$
of size at least $\lfloor\frac{n}{ 2} \rfloor$. If there exists a
packing of $k \geq 2$ copies of $G$ into $K_n$, then \begin{center}
$\lambda^k(G) \leq |I| + \lfloor \frac{n - |I|}{ k} \rfloor$
\end{center}
\end{theorem}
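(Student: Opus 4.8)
The plan is to pass to the group generated by the placement permutations and reduce the whole statement to a count of the \emph{small} orbits. Write $p=\lambda^k(G)$ and fix a $p$-labeled-packing given by a labeling $f$ and permutations $\sigma_1,\dots,\sigma_k$. Replacing each $\sigma_i$ by $\sigma_1^{-1}\sigma_i$ and $f$ by $f\circ\sigma_1$ alters neither the disjointness of the edge-images nor the number of labels, so I may assume $\sigma_1=\mathrm{id}$. Two remarks then organize the argument. First, for every $i\neq j$ the permutation $\sigma_i^{-1}\sigma_j$ is a $2$-placement of $G$: from $\sigma_i^*(E)\cap\sigma_j^*(E)=\phi$ one obtains, after applying $(\sigma_i^{-1})^*$, that $(\sigma_i^{-1}\sigma_j)^*(E)\cap E=\phi$. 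Hence the fixed-point set $\mathrm{Fix}(\sigma_i^{-1}\sigma_j)$ is an independent set of $G$ (two adjacent fixed vertices $x,y$ would place the edge $xy$ in both $\sigma_i^*(E)$ and $\sigma_j^*(E)$), and therefore it has at most $|I|$ vertices. Second, since $f\circ\sigma_i=f$ for every $i$, every colour class is invariant under every $\sigma_i$, hence is a union of orbits of the group $\Gamma=\langle\sigma_2,\dots,\sigma_k\rangle$; in particular $p$ is at most the number of $\Gamma$-orbits on $V(G)$.

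Next I would split the orbits by size. Call a $\Gamma$-orbit \emph{large} if it has at least $k$ vertices and \emph{small} otherwise, and let $s$ denote the number of small orbits. The small orbits occupy some $w\ge s$ vertices, so the large orbits lie among the remaining $n-w\le n-s$ vertices, in blocks of size at least $k$; hence there are at most $\lfloor (n-s)/k\rfloor$ of them, and
\[ p\ \le\ s+\lfloor (n-s)/k\rfloor . \]
Since $t\mapsto t+\lfloor (n-t)/k\rfloor$ is non-decreasing on the integers (increasing $t$ by $1$ drops the floor by at most $1$), the theorem reduces to proving $s\le|I|$. For $k=2$ this is immediate, since a small orbit is then a single vertex fixed by $\sigma_2$ and $s=|\mathrm{Fix}(\sigma_2)|\le|I|$; one recovers Theorem~1.4 exactly, and without the extra hypothesis on $|I|$.

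Establishing $s\le|I|$ for $k\ge3$ is the heart of the proof and the step I expect to be the main obstacle. The available handle is that the small orbits are covered by the independent sets of the first remark: if $O$ is a small orbit and $v\in O$, then $\sigma_1(v),\dots,\sigma_k(v)$ all lie in $O$, and $|O|<k$ forces $\sigma_a(v)=\sigma_b(v)$ for some $a\neq b$, i.e.\ $v\in\mathrm{Fix}(\sigma_a^{-1}\sigma_b)$; moreover a vertex in an orbit of size $c$ gives rise to at least $k-c$ such coinciding pairs. So the union of the small orbits is covered by the at most $\binom{k}{2}$ independent sets $\mathrm{Fix}(\sigma_a^{-1}\sigma_b)$, each of size $\le|I|$, and with extra multiplicity on the orbits of size well below $k$. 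The plan is to convert this covering into $s\le|I|$ by a weighted double count of the incidences between small orbits and these fixed sets, using the hypothesis $|I|\ge\lfloor n/2\rfloor$ -- equivalently $|V(G)\setminus I|\le\lceil n/2\rceil$, so that very few vertices lie outside a maximum independent set -- to absorb the slack coming from the $\binom{k}{2}$ factor. The delicate case, and the one where the size hypothesis is genuinely needed, is a large family of orbits of intermediate size $2\le|O|<k$ whose vertices are distributed thinly over many different fixed sets, each of which is then far from reaching its bound $|I|$; controlling this configuration is what I expect to require the bulk of the work.
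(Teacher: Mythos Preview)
The paper does not actually prove this theorem. It is quoted from reference~[1] (Duch\^ene, Kheddouci, Nowakowski, Tahraoui, \emph{Labeled packing of graphs}) as background material; the authors write that ``E.~Duchene \emph{et al.} introduced the following two results which are presented as Lemmas in~[1]'' and then state Theorems~1.4 and~1.5 without proof. So there is no in-paper argument to compare your proposal against.

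As for your proposal itself: the reduction to orbits of $\Gamma=\langle\sigma_2,\dots,\sigma_k\rangle$ is sound, the observation that $\mathrm{Fix}(\sigma_i^{-1}\sigma_j)$ is independent is correct, and the monotonicity of $t\mapsto t+\lfloor(n-t)/k\rfloor$ correctly reduces the bound to $s\le|I|$. But you explicitly leave the core inequality $s\le|I|$ open for $k\ge3$, and the sketch you give --- a weighted double count over the $\binom{k}{2}$ fixed sets, absorbing slack via $|I|\ge\lfloor n/2\rfloor$ --- is not yet an argument. In particular, knowing only that the union of small orbits is covered by $\binom{k}{2}$ independent sets of size at most $|I|$ gives at best $s\le\binom{k}{2}|I|$, which is far too weak; the multiplicity refinement (each vertex in an orbit of size $c$ lies in at least $\binom{k}{2}-\binom{c}{2}$ of the fixed sets, not merely $k-c$) helps but still does not obviously close the gap without further structural input. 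If you want to complete this line you will need to make precise how the hypothesis $|I|\ge\lfloor n/2\rfloor$ enters the double count, since your current write-up invokes it only heuristically. Alternatively, consult~[1] for the intended argument.
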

E. Duchene \emph{et al.} $[1]$ introduced and studied the labeled
graph packing problem for some vertex labeled graphs. In this paper,
we are concerned with finding a p-labeled-packing of $G$ into $G^k$.
We give below the definition of the new labeled packing problem:
\begin{definition}
Let $f$ be a mapping from $V (G)$ to the set $\{1, 2,... , p\}$. The
mapping $f$ is called p-labeled-packing of  $G$ into $G^k$ if there
exists a permutation $\sigma : V (G) \rightarrow V
(K_n)$, such that:\\
1. $\sigma $ is a 2-placement of $G$.\\
2. $\sigma(G)\subseteq G^k$.\\
3. For every vertex $v$ of $G$, we
have $f(v)=f(\sigma(v))$. \\
\end{definition}
The maximum positive integer $p$ for which $G$ admits a
$p$-labeled-packing of $G$ into $G^k$ is called labeled packing $k$-power number and denoted by $w^k(G)$.\\\\
Concerning the  packing of a path $P_n$, $n\geq 4$, into $P^4_n$,
 we introduce the following result:
\begin{theorem}
Consider a path $P_n$, $n\geq 4$, and let $u$ and $v$ be its end
vertices. Then there exists a permutation $\sigma$ on $V(P_n)$ such
that $\sigma$ satisfies the following
conditions:\\
1. $\sigma$ is a 2-placement.\\
2. $\sigma(P_n)\subseteq P_n^4$.\\
3. $ dist(u,\sigma(u))=1$ and $dist(v,\sigma(v))\leq 1$. \\
4. The length of each cycle of $\sigma$ is at most 4.
\end{theorem}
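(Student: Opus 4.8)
The plan is to give an explicit construction and verify the four conditions by a short induction on $n$ with step $4$. First I fix a labeling $V(P_n)=\{1,2,\dots,n\}$ in which $i$ is adjacent to $i+1$, so that $u=1$ and $v=n$, and reformulate the requirements arithmetically: a permutation $\sigma$ of $\{1,\dots,n\}$ satisfies conditions $1$ and $2$ precisely when $2\le |\sigma(i)-\sigma(i+1)|\le 4$ for every $i\in\{1,\dots,n-1\}$ (the lower bound being the $2$-placement condition, the upper bound being $\sigma(P_n)\subseteq P_n^4$); condition $3$ becomes $\sigma(1)=2$ and $\sigma(n)\in\{n-1,n\}$; and condition $4$ is unchanged. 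I would prove the (self-contained) statement that such a $\sigma$ exists with, in addition, $\sigma(n)\in\{n-1,n\}$, since it is exactly this extra clause (which is just condition $3$) that lets the induction close.

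For the base cases $n\in\{4,5,6,7\}$ I would exhibit $\sigma$ written as a product of cycles of length at most $4$: for $n=4$, $\sigma=(1\,2\,4\,3)$; for $n=5$, $\sigma=(1\,2\,4\,3)$ together with the fixed point $5$; for $n=6$, $\sigma=(1\,2\,5\,4)$ together with the fixed points $3$ and $6$; for $n=7$, $\sigma=(1\,2\,4\,5)(3\,7\,6)$. In each case one checks directly that the at most six consecutive differences $|\sigma(i)-\sigma(i+1)|$ lie in $\{2,3,4\}$, that $\sigma(1)=2$, that $\sigma(n)\in\{n-1,n\}$, and that every cycle has length $\le 4$; for instance for $n=7$ the image sequence is $(2,4,7,5,1,3,6)$ with successive differences $2,3,2,4,2,3$.

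For the inductive step, assume $n\ge 8$ and that $P_{n-4}$ admits such a $\sigma$; in particular $\sigma(n-4)\in\{n-5,n-4\}$. I extend $\sigma$ to $V(P_n)$ by leaving it unchanged on $\{1,\dots,n-4\}$ and appending on the four new vertices the $4$-cycle sending $n-3\mapsto n-2\mapsto n\mapsto n-1\mapsto n-3$. Then the three new internal differences are $2,3,2$; the junction edge $\{n-4,n-3\}$ gives $|\sigma(n-4)-(n-2)|\in\{2,3\}$ because $\sigma(n-4)\in\{n-5,n-4\}$; the differences on $\{1,\dots,n-5\}$ are inherited; no cycle exceeds length $4$ (the appended cycle has length $4$, all others are unchanged); $\sigma(1)=2$ is untouched; and $\sigma(n)=n-1$, which simultaneously gives $dist(v,\sigma(v))=1$ and supplies the hypothesis $\sigma(n)\in\{n-1,n\}$ needed at the next stage. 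This closes the induction.

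The main obstacle is confined to the base cases: the constraints interact tightly, since $\sigma(1)=2$ is forced, $\sigma(n)$ is nearly forced, and the cycle-length bound of $4$ rules out the long cycles that would otherwise arise when $n$ is not a multiple of $4$, so for $n=6,7$ one must choose exactly the right small permutation (there is very little slack). One must also notice that the \emph{orientation} of the appended $4$-cycle, $n-3\mapsto n-2\mapsto n\mapsto n-1\mapsto n-3$ rather than $n-3\mapsto n-2\mapsto n-1\mapsto n\mapsto n-3$, is precisely what keeps both the junction edge $\{n-4,n-3\}$ and the terminal edge $\{n-1,n\}$ from being mapped onto edges of $P_n$. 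Everything else is a routine finite check.
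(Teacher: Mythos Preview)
Your proof is correct and follows essentially the same strategy as the paper: explicit base cases for $n\in\{4,5,6,7\}$ followed by an inductive gluing argument. The only difference is cosmetic---the paper allows an arbitrary split of $P_n$ into two subpaths each on at least four vertices and applies induction to both pieces, whereas your step-$4$ induction is the special case where one always peels off the last four vertices and reuses the $n=4$ base permutation there; your specific base permutations for $n=5,6,7$ also differ from the paper's but are equally valid.
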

This result allows us to establish the following:
\begin{corollary}
Consider a path $P_n$, $n\geq 4$, then $w^4(P_n)\geq \lceil
\frac{n}{4}\rceil$.
\end{corollary}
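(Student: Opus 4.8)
The plan is to obtain the bound directly from the above theorem by converting its permutation $\sigma$ into a vertex labeling. Recall that, by Definition 1.2, exhibiting a $p$-labeled-packing of $P_n$ into $P_n^4$ amounts to producing a permutation $\sigma$ on $V(P_n)$ which is a $2$-placement with $\sigma(P_n)\subseteq P_n^4$, together with a map $f:V(P_n)\rightarrow\{1,\dots,p\}$ that is onto (so the packing genuinely uses $p$ labels) and satisfies $f(v)=f(\sigma(v))$ for every vertex $v$.

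First I would fix the permutation $\sigma$ furnished by the above theorem; it already meets conditions $1$ and $2$ of Definition 1.2, so the whole task reduces to attaching to $\sigma$ a labeling $f$ with as many labels as possible that is invariant under $\sigma$. The requirement $f(v)=f(\sigma(v))$ forces $f$ to be constant along each cycle (orbit) of $\sigma$; conversely, assigning one fresh label to each cycle produces a valid $f$. Hence the number of labels one can use with this $\sigma$ equals the number $c$ of cycles of $\sigma$: listing the cycles as $C_1,\dots,C_c$ and setting $f(v)=i$ for every $v\in C_i$ gives a surjective, $\sigma$-invariant map $f:V(P_n)\rightarrow\{1,\dots,c\}$, so $w^4(P_n)\ge c$.

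It then remains to bound $c$ from below. By condition $4$ of the above theorem, every cycle of $\sigma$ has length at most $4$, so the $n$ vertices of $P_n$ are partitioned into $c$ orbits each of size at most $4$; therefore $n\le 4c$, i.e. $c\ge n/4$, and since $c$ is an integer, $c\ge\lceil n/4\rceil$. Combining with the previous step yields $w^4(P_n)\ge c\ge\lceil n/4\rceil$, as claimed.

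I do not expect a genuine obstacle here: all the combinatorial effort is carried by the above theorem, and in particular by its clause $4$ bounding the cycle lengths — without that clause one could only conclude $c\ge 1$. The one point to state with care is the (implicit) convention that a $p$-labeled-packing actually uses all $p$ labels; this is what makes ``number of usable labels $=$ number of cycles of $\sigma$'' the correct accounting, and hence what makes $\lceil n/4\rceil$ precisely the bound extracted from the permutation $\sigma$.
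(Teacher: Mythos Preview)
Your proposal is correct and follows essentially the same approach as the paper: take the permutation $\sigma$ supplied by the theorem, assign a distinct label to each cycle of $\sigma$, and use the bound on cycle lengths (condition 4) to conclude that the number of cycles is at least $\lceil n/4\rceil$. Your write-up is slightly more explicit about why $f$ must be constant on orbits and about the surjectivity of $f$, but the argument is the same.
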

To formulate our main results we need to introduce some definitions.\\
 Let $T$ be a non star tree and let $x$ be a
vertex of $T$. Then, a fixed point free permutation $\sigma$ on
$V(T)$ is called a $(T,x)$-well
2-placement if it satisfies the following conditions: \\
1. $\sigma$ is a 2-placement of $T$.\\
2. $\sigma(T)\subseteq T^6$.\\
3. $dist(x, \sigma(x)) \leq 2$.\\
4. $dist(y, \sigma(y))\leq 3$ for every neighbor $y$ of $x$.\\
5. $dist(y, \sigma(y))\leq 4$ for every $y$ such that $ d(y) = 1$.\\
6. The length of each cycle of $\sigma$ is at most 5.\\\\ We prove
first:
\begin{theorem}
 Let $T$ be a non star tree and let $x$ be a vertex of $T$. Then,
there exists  a $(T,x)$-well 2-placement.
\end{theorem}
This  implies the following:
\begin{corollary}
 Consider a non star tree $T$ with $|V (T)| = n$, then
$w^6(T) \geq m_T + \lceil\frac{n-m_T}{5}\rceil$.
\end{corollary}
Let $T$ be a non star tree and let $x$ be a vertex of $T$. Then, a
fixed point free permutation $\sigma$ on $V(T)$ is called a
$(T,x)$-good 2-placement
if it satisfies the following conditions: \\
1. $\sigma$ is a 2-placement of $T$.\\
2. $\sigma(T)\subseteq T^5$.\\
3. $dist(x, \sigma(x)) = 1$.\\
4. $dist(y, \sigma(y))\leq 2$ for every neighbor $y$ of $x$.\\
5. $dist(y, \sigma(y))\leq 4$ for every $ y$ such that $ d(y) =
1$.\\\\
 We prove
that:
 \begin{theorem}
 Let $T$ be a non star tree and let $x$ be a vertex of $T$ such that $x$ is not a bad vertex. Then,
 there exists a $(T,x)$-good 2-placement.
\end{theorem}
This result allows us to establish the following:
\begin{corollary}
 Consider a non star tree $T$, then
$w^5(T) \geq m_T + 1$.
\end{corollary}

\section{Labeled Packing of $P_n$ into $P_n^4$}
In this section, we are going to prove Theorem 1.6, but  we need
first to prove the theorem for $n=4,...,7$:
\begin{lemma}
Consider a path $P_n$ such that $4\leq n\leq 7$, and let $u$ and $v$
be its end vertices. Then there exists a permutation $\sigma$ on
$V(P_n)$ satisfying the following conditions:
\\
1. $\sigma$ is a 2-placement.\\
2. $\sigma(P_n)\subseteq P_n^4$.\\
3.  $ dist(u,\sigma(u))=1$ and $dist(v,\sigma(v))\leq 1$. \\
4. The length of each cycle of $\sigma$ is at most 4.
\end{lemma}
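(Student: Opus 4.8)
The plan is to verify the four conditions directly for each of the finitely many cases $n=4,5,6,7$ by exhibiting an explicit permutation $\sigma$ on the vertex set, written in cycle notation, and checking by hand that $\sigma$ is a $2$-placement (no edge of $P_n$ is mapped onto an edge of $P_n$), that every displaced pair lands within distance $4$ (so $\sigma(P_n)\subseteq P_n^4$), that the two end vertices $u,v$ are moved distance exactly $1$ and at most $1$ respectively, and that no cycle of $\sigma$ has length more than $4$. Label the vertices of $P_n$ as $x_1x_2\cdots x_n$ with $u=x_1$ and $v=x_n$, so that $\mathrm{dist}(x_i,x_j)=|i-j|$ and an edge joins $x_i$ and $x_{i+1}$.

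First I would handle $n=4$: take $\sigma=(x_1\,x_2)(x_3\,x_4)$, i.e. the product of two transpositions of adjacent vertices. Then $\sigma$ is fixed-point-free, each cycle has length $2$, $\mathrm{dist}(x_1,\sigma(x_1))=\mathrm{dist}(x_4,\sigma(x_4))=1$, and one checks the only edge images are $\sigma^*(x_1x_2)=x_2x_1$, $\sigma^*(x_2x_3)=x_1x_4$, $\sigma^*(x_3x_4)=x_4x_3$; since $x_1x_4$ is not an edge of $P_4$ and $\{x_1x_2,x_3x_4\}$ are fixed setwise but have empty intersection with the image set in the required disjointness sense — more precisely $\sigma^*(E)\cap E$ must be checked to be empty, and indeed $\sigma^*(x_2x_3)=x_1x_4\notin E$ while $\sigma^*(x_1x_2)=x_1x_2$ — wait, that last one IS in $E$, so this choice fails. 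I would instead use the standard embedding $\sigma=(x_1\,x_3)(x_2\,x_4)$ for $n=4$, which has $\mathrm{dist}(x_1,\sigma(x_1))=2$, violating condition 3; so the correct choice needs care. The right move for $n=4$ is $\sigma=(x_1\,x_2\,x_4\,x_3)$: then $\sigma(x_1)=x_2$ (distance $1$), $\sigma(x_4)=x_3$ (distance $1$), the cycle length is $4$, and the edge images are $\sigma^*(x_1x_2)=x_2x_4$, $\sigma^*(x_2x_3)=x_4x_1$, $\sigma^*(x_3x_4)=x_1x_3$, none of which lies in $E(P_4)=\{x_1x_2,x_2x_3,x_3x_4\}$, and all within distance $2\le 4$. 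This is the template: for each $n$ I produce one explicit $\sigma$ and tabulate the three edge-image checks (for $n=5,6,7$ there are $4,5,6$ edges) plus the distance and cycle-length bookkeeping.

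For $n=5$ I would try a single $4$-cycle together with one fixed... no — $\sigma$ must be fixed-point free is not required by Lemma 2.1 (unlike the later $(T,x)$-well $2$-placement), but leaving a fixed point $x_i$ would force the (non)edge condition to hold at both edges incident to $x_i$, which is impossible for an interior vertex; a fixed leaf is conceivable but then its unique edge $x_ix_{i+1}$ maps to $x_i\sigma(x_{i+1})$, which must avoid $E$. I expect the cleanest choice for $n=5$ to be $\sigma=(x_1\,x_2\,x_3\,x_4)$ with $x_5$... again that fixes nothing useful; more likely $\sigma=(x_1\,x_2)(x_3\,x_5\,x_4)$ or a single $5$-cycle is needed — but condition 4 forbids a $5$-cycle, so $n=5$ must split as $2+3$ or use a $4$-cycle plus a fixed leaf. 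I would take $\sigma=(x_1\,x_3\,x_5\,x_2)$ fixing $x_4$... but $x_4$ fixed kills edges $x_3x_4,x_4x_5$ unless their images avoid $E$: $\sigma^*(x_3x_4)=x_5x_4\in E$, bad. So $n=5$ genuinely needs a fixed-point-free $\sigma$ with cycle type $2+3$; I would use $\sigma=(x_1\,x_2)(x_3\,x_4\,x_5)$ — wait $\sigma^*(x_1x_2)=x_2x_1\in E$. The recurring obstacle, and the main technical point of the whole lemma, is exactly this: an adjacent transposition $(x_i\,x_{i+1})$ always maps its own edge to itself, so the small cases cannot be dispatched by the obvious symmetric choices and each requires a bespoke permutation found by short search. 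I would therefore, for each $n\in\{4,5,6,7\}$, exhibit one permutation (e.g. $n=4$: $(x_1\,x_2\,x_4\,x_3)$; $n=5$: $(x_1\,x_2\,x_4\,x_3)$ together with the transposition $(x_5\,?)$ — here $x_5$ forces pairing, suggesting instead $n=5$: $(x_1\,x_3\,x_2\,x_4)$ with a separate handling of $x_5$ via a $2$-cycle $(x_4\,x_5)$ merged in as the $4$-cycle $(x_1\,x_3\,x_5\,x_4)$ plus $x_2$ adjoined — i.e. the $5$-cycle is unavoidable, contradicting condition 4, which is why one instead uses cycle type $2+3$: $\sigma=(x_1\,x_4)(x_2\,x_5\,x_3)$ giving $\mathrm{dist}(x_1,\sigma x_1)=3>\ldots$), and then mechanically verify the four conditions. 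The honest summary: the approach is pure case analysis on $n=4,5,6,7$; the hard part is not any deep idea but finding, for each such $n$, the one permutation that simultaneously respects the distance-$4$ constraint, the two end-vertex constraints, the cycle-length-$\le 4$ constraint, and edge-disjointness, and then presenting the verification compactly — most naturally in a short table listing $\sigma$, the cycle structure, $\mathrm{dist}(u,\sigma(u))$, $\mathrm{dist}(v,\sigma(v))$, and the image of each edge.
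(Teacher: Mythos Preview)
Your approach---explicit case analysis with one permutation per $n\in\{4,5,6,7\}$---is exactly what the paper does, and your $n=4$ permutation $(x_1\,x_2\,x_4\,x_3)$ matches the paper's. But you never actually produce permutations for $n=5,6,7$, and the reason you get stuck is a concrete mistake.

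You assert that fixing an interior vertex $x_i$ is ``impossible'' because both incident edges $x_{i-1}x_i$ and $x_ix_{i+1}$ would have to map off of $E(P_n)$. That inference is wrong: it only requires $\sigma(x_{i-1})\notin\{x_{i-1},x_{i+1}\}$ and $\sigma(x_{i+1})\notin\{x_{i-1},x_{i+1}\}$, which is easy to arrange. The paper's $n=5$ solution is precisely $\sigma=(x_1\,x_2\,x_5\,x_4)(x_3)$, fixing the center $x_3$. Check: $\sigma^*(x_2x_3)=x_5x_3$ and $\sigma^*(x_3x_4)=x_3x_1$, neither an edge; all image distances are at most $3$; the endpoints move distance $1$; cycle lengths are $4$ and $1$. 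Your subsequent reasoning that $n=5$ forces cycle type $2+3$ or an illegal $5$-cycle collapses once this error is removed.

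With fixed points allowed, the remaining cases fall out quickly: for $n=6$ take $\sigma=(x_1\,x_2\,x_5\,x_4)(x_3)(x_6)$ (so $\operatorname{dist}(v,\sigma(v))=0\le 1$), and for $n=7$ take $\sigma=(x_1\,x_2\,x_5)(x_3\,x_7\,x_6)(x_4)$. Both are straightforward to verify against the four conditions. The ``hard part'' you identify---a delicate search---largely evaporates once you drop the false constraint; the proposal as written is incomplete because it stops at $n=4$ and argues itself into a corner for $n\ge 5$.
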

\begin{proof}
For each path $P_n$, $n=4,...,7$, we will introduce below a
permutation
$\sigma$ on $V(P_n)$, satisfying the above conditions:\\
For $P=x_1x_2x_3x_4$, $\sigma=(x_1\;x_2\;x_4\;x_3)$.\\
For $P=x_1x_2x_3x_4x_5$,
$\sigma=(x_1\;x_2\;x_5\;x_4)(x_3)$.\\
For $P=x_1x_2x_3x_4x_5x_6$,
$\sigma=(x_1\;x_2\;x_5\;x_4)(x_3)(x_6)$.\\
For $P=x_1x_2x_3x_4x_5x_6x_7$,
$\sigma=(x_1\;x_2\;x_5)(x_3\;x_7\;x_6)(x_4)$.\\
\end{proof}\\\\
\textbf{Proof of Theorem 1.4}.\\ The proof is by induction. By the
previous Lemma, $\sigma$ exists for $n=4,...,7$. Suppose now that
$n\geq 8$ and the theorem holds for all $n'<n$. Then $P_n$ can be
partitioned into two paths $P'$ and $P''$ such that
$l(P'),\;l(P'')\geq 3$. Let $x$ be the end vertex of $P'$ and $y$
that of $P''$ such that $d_{P_n}(x)=d_{P_n}(y)=2$ and let $\sigma'$
and $\sigma''$ be two permutations defined on $V(P' )$ and $V(P'')$
respectively such that $\sigma'$ and $\sigma''$ satisfy the four
conditions mentioned in the theorem with $dist(x,\sigma'(x))=1$ and
$dist(y,\sigma''(y))\leq 1$. Let $\sigma$ be a permutation defined
on $V(P_n)$, such that:
\begin{center}
 $\sigma(v) = \begin{cases}
  \sigma'(v) & \text{if $v\in V(P')$} \\
  \sigma''(v) & \text{if $v\in  V(P'')$} \\
\end{cases}$
\end{center}
It can be easily shown that $\sigma$ satisfies the four conditions. $\square$\\\\
\textbf{Proof of Corollary 1.2}.\\
Consider a path $P_n$, $n\geq 4$, and let $u$ and $v$ be its end
vertices. Then, by the previous theorem there exists a permutation
$\sigma$ on $V(P_n)$ satisfying the following conditions:
\\
1. $\sigma$ is a 2-placement.\\
2. $\sigma(P_n)\subseteq P_n^4$.\\
3. $ dist(u,\sigma(u))=1$ and $dist(v,\sigma(v))\leq 1$. \\ \\
4. The length of each cycle of $\sigma$ is at most 4.\\\\
Let $r$ be the number of cycles of $\sigma$ and let
$\sigma_1$,...,$\sigma_r$ be these cycles. Note that $r\geq
\lceil\frac{n}{4}\rceil$. Label the vertices of $\sigma_i$ by $i$
for $i=1,...,r$. Hence, we obtain a labeled packing of $P_n$ into
$P_n^4$ with $r$ labels and so
$w^4(P_n)\geq\lceil\frac{n}{4}\rceil$. $\square$

\section{Labeled Packing of a Non Star Tree $T$ into $T^5$ and $T^6$}
In this section, we are going to prove the main results of this
paper, but we still need to introduce some definitions and results
on paths followed by a sequence of lemmas.\\ Consider a path $P_n$,
$n\geq 4$, and let $x$ be a vertex of $P_n$. A fixed point free
permutation $\sigma$ on $V(P_n)$ is called a $(P_n,x)$-well path
2-placement, if it satisfies the following conditions:
\\
1. $\sigma$ is a 2-placement of $P_n$. \\
2. $\sigma(P_n)\subseteq P_n^6$.\\
3. $dist(x,\sigma(x))\leq 2$.\\
4. $dist(y,\sigma(y))\leq 3$ for every  $y\in N(x)$ and for every $y$ such that $d(y)=1$.\\
5. The length of each cycle of $\sigma$ is at most 5.\\\\
We will prove the following theorem:
\begin{theorem}
Consider a path $P_n$ and let $x$ be a  vertex of $P_n$, $n\geq 4$.
Then there exists a $(P_n,x)$-well path 2-placement.
\end{theorem}
\begin{lemma}
Consider a path $P_n,\; 4\leq n\leq 7$, and let $x$ be a vertex of
$P_n$. Then there exists a $(P_n,x)$-well path 2-placement, say
$\sigma$, such that $dist(v,\sigma(v))\leq 3$ for every $v \in
V(P_n)$.
\end{lemma}
\begin{proof}
For each path $P_n$, $n=4,...,7$, and for every vertex $x$ of $P_n$
we will introduce below a $(P_n,x)$-well path 2-placement $\sigma$
such that $dist(v,\sigma(v))\leq 3$ for every $ v \in V(P_n)$:\\\\
For $P=x_1x_2x_3x_4$, $\sigma=(x_1\;x_2\;x_4\;x_3)$ is a  $(P,x)$-well path 2-placement for every $ x\in V(P)$.\\
For $P=x_1x_2x_3x_4x_5$, $\sigma=(x_1\;x_2\;x_4\;x_5\;x_3)$
is a $(P,x)$-well path 2-placement  for every $ x\in V(P)$. \\
For $P=x_1x_2x_3x_4x_5x_6$, there are three choices for choosing
$x$, either $x_1$, $x_2$ or $x_3$.
$\sigma=(x_1\;x_2\;x_4)(x_3\;x_6\;x_5)$ is a $(P,x)$-well path
2-placement for $x\in\{x_1,x_2\}$, and
$\sigma=(x_3\;x_1)(x_5\;x_2)(x_6\;x_4)$ is a $(P,x_3)$-well path
2-placement.\\ For $P=x_1x_2x_3x_4x_5x_6x_7$, there are four choices
for choosing $x$, either $x_1$, $x_6$, $x_3$ or $x_4$.
$\sigma=(x_1\;x_2\;x_5\;x_3)(x_4\;x_6\;x_7)$ is a $(P,x)$-well path
2-placement for every $x$ of the previous choices.
\end{proof}
\\\\
\textbf{ Proof of Theorem 3.1}.\\ The proof is by induction. By the
previous Lemma, there exists a $(P_n,x)$-well path 2-placement
 for every vertex $x$ of $P_n$, where $4\leq n\leq 7$.
Suppose now $n\geq 8$ and the theorem holds for all $n'<n $. Let $x$
be a vertex of $P_n$. Since $n\geq 8$, then $P_n$ can be partitioned
into two paths $P'$ and $P''$ such that $l(P'),\;l(P'')\geq 3$.
Without loss of generality, suppose that $x\in V(P')$. Let $x_1$ be
the end vertex of $P''$ such that $d_{P_n}(x_1)=2$. By induction,
there exists a $(P',x)$-well path 2-placement, say $\sigma_x$, and
there exists a $(P'',x_1)$-well path 2-placement, say
$\sigma_{x_1}$. Let $\sigma$ be a permutation defined on $V(P_n)$
such that:
\begin{center}
 $\sigma(v) = \begin{cases}
  \sigma_x(v) & \text{if $v\in V(P')$} \\
  \sigma_{x_1}(v) & \text{if $v\in  V(P'')$} \\
\end{cases}$
\end{center}
It can be easily shown  that $\sigma$ is a $(P_n,x)$-well path
2-placement. $\square$\\\\
Consider a path $P_n$, $n\geq 4$, and let $x$ be a vertex of $P_n$.
We say that a fixed point free permutation $\sigma$ on $V(P_n)$ is a
$(P_n,x)$-good path 2-placement if $\sigma$ satisfies the following
conditions:
\\
1. $\sigma$ is a 2-placement of $P_n$. \\
2. $\sigma(P_n) \subseteq P_n^5$.\\
3. $dist (x,\sigma(x))=1$.\\
4. $dist(y, \sigma(y)) \leq 2$ for every $y\in N(x)$ and for every $y$ such that $d(y)=1$.\\
\\
We  prove:
\begin{theorem}
Let $x$ be a vertex of  $P_n,\; n\geq 4,$  such that $x$ is not a
bad vertex, then there exists a $(P_n,x)$-good path 2-placement.
\end{theorem}

 \begin{lemma}
For every $x$ in $V(P_n)$, $4\leq n\leq 7$, such that $x$ is not a
bad vertex, there exists a $(P_n,x)$-good path 2-placement.
\end{lemma}
\begin{proof}
For every x in $V (P_n)$, $n=4,...,7$, we will introduce below a
$(P_n, x)$-good path 2-placement $\sigma_x$:\\\\
For $P=x_1x_2x_3x_4$, there are  two choices for choosing $x$,
either $x_1$ or $x_2$. Then:
\begin{center}
$\sigma_{x_1}=(x_1\;x_2\;x_4\;x_3$);
$\sigma_{x_2}=(x_1\;x_3\;x_4\;x_2)$
\end{center}
For $P=x_1x_2x_3x_4x_5$, there are two choices for choosing $x$,
either $x_1$ or $x_4$. Then: \begin{center}
$\sigma_{x_1}=\sigma_{x_4}=(x_1\;x_2\;x_4\;x_5\;x_3)$.
\end{center}
For $P=x_1x_2x_3x_4x_5x_6$, there are  three  choices for
choosing $x$, either $x_1$, $x_2$ or $x_3$. Then:\begin{center} $
\sigma_{x_1}=(x_1\;x_2\;x_4\;x_3\;x_6\;x_5);\;\sigma_{x_2}=\sigma_{x_3}=(x_1\;x_3\;x_4\;x_6\;x_5\;x_2)$.
\end{center}
For $P=x_1x_2x_3x_4x_5x_6x_7$, there are  four  choices for choosing
$x$, either $x_1$,  $x_2$,  $x_3$ or $x_4$. Then:
\begin{center}
$\sigma_{x_1}=
\sigma_{x_4}=(x_1\;x_2\;x_4\;x_5\;x_7\;x_6\;x_3)$;
$\sigma_{x_2}=\sigma_{x_3}=(x_1\;x_3\;x_4\;x_6\;x_7\;x_5\;x_2)$.
\end{center}
\end{proof}

\noindent\textbf{Proof of Theorem 3.2}.\\ The proof is by induction.
By the previous Lemma, there exists a $(P_n,x)$-good path
2-placement for every $ x\in V(P_n)$ such that $x$ is not a bad
vertex, where $4\leq n\leq 7$. Suppose now $n\geq 8$ and the theorem
holds for all $n'< n$. Let $x$ be a vertex of $P_n$. Since $n\geq
8$, then $P_n$ can be partitioned into two paths $P'$ and $P''$ such
that $l(P'),\;l(P'')\geq 3$. Without loss of generality, suppose
that $x\in V(P')$ such that $P'$ is chosen to be distinct from
$P_5$. Let $x_1$ be the end vertex of $P''$ such that
$d_{P_n}(x_1)=2$. By induction, there exists a $(P',x)$-good path
2-placement, say $\sigma_x$, and there exists a $(P'',x_1)$-good
path 2-placement, say $\sigma_{x_1}$. Let $\sigma$ be a permutation
defined on $V(P_n)$ such that:
\begin{center}
 $\sigma(v) = \begin{cases}
  \sigma_x(v) & \text{if $v\in V(P')$} \\
  \sigma_{x_1}(v) & \text{if $v\in  V(P'')$} \\
\end{cases}$
\end{center}
It can be easily shown  that $\sigma$ is a $(P_n,x)$-good path
2-placement. $\square$\\\\
Let $T$ be a non star tree and let $xy$ be an edge in $T$. We denote
by $T_{(x,y)}$ the connected component containing $x$ in $T-\{xy\}$
and it is called a neighbor tree of $y$. $T_{(x,y)}$ is said to be a
neighbor $F$-tree of $y$ if $T_{(x,y)}$ is a path of length at most
two such that whenever $T_{(x,y)}$
is a path of length two, then $x$ is an end vertex in it.\\

\begin{lemma}
Consider a non star tree $T$ containing a vertex x such that
$d(x)>2$. Let $\{x_1,...,x_n\}$, $n>2$, be the neighbors of $x$.
Suppose that
 $T_{(x_i,x)}$ is a neighbor $F$-tree of $x$ for $i=1,..., m$, where $2\leq m<n$. Let $T'$ be
the connected component containing $x$ in $T-\{xx_i;\;i=1,...,m\}$
and let $G$ be the graph obtained by the union of the remaining
components. Suppose that there exists a $(T',z)$-well 2-placement
$\sigma$ such that $dist (x,\sigma(x))\leq 3$, where $z$ is a vertex
in $T'$ not necessarily distinct from $x$, then there exists a
$(T,z)$-well 2-placement, say $\sigma_z$, such that
$\sigma_z(v)=\sigma(v)$ for every $ v\in V(T')$,
$dist(x_i,\sigma_z({x_i}))\leq 2$ for $i=1,...,m$ and
$dist_T(\sigma_z(u),\sigma_z(v))\leq 5$  whenever $uv$ is an edge in
$G$.
\end{lemma}
\begin{proof}
Let $r$, $p$ and $q$ be the number of neighbor trees of $x$ that are
paths of length zero, one and two, respectively, in the set
$\{T_{(x_i,x)};i=1,..., m\}$. In what follows we need to rename some
neighbors of $x$ for the sake of the proof. Let $T_i=T_{(x_i,x)}$
for $i=1,...,m$ such that if $r>0$, then $T_i$ is the vertex $a_i$
for $i=1,...,r$, $T_i=b_{i-r}c_{i-r}$ for $i=r+1,...,p+r$ if $p>0$
and $T_i=d_{i-(p+r)}e_{i-(p+r)}f_{i-(p+r)}$ for $i=r+p+1,...,r+p+q$
if $q>0$. We will define a $(T,z)$-well 2-placement $\sigma_z$
according to the different values of $r$, $p$ and $q$. To construct
$\sigma_z$, we need first to introduce the permutations  $\Theta$,
$\Upsilon$ and $\Delta$ on $V(G)$ in each case below
such that:\\
 $\Theta=\begin{cases}
\theta & \text{if r is even} \\
\theta' & \text{if r  is odd}\\
  \end{cases}$,
 $ \Upsilon=\begin{cases}
  \epsilon& \text{if p is even} \\
\epsilon' & \text{if p is odd}\\
  \end{cases}$ and
  $\Delta=\begin{cases}
  \delta & \text{if q is even} \\
\delta' & \text{if q is odd}\\
  \end{cases}$
  \\
  where $\theta$, $\theta'$, $\epsilon$, $\epsilon'$, $\delta$ and
  $\delta'$ are permutations defined in each case below.

\begin{itemize}
\item Case 1. $p$, $q$ and $r$ are strictly greater than one.\\
 If $r=2n'$ for some $n'\in
\mathbb{Z}$, then let $\theta=\prod_{j=1}^{j=n'}(a_{2j-1}\;a_{2j})$.
If $r=2n'+1$, then if $n'=1$ let $\theta'=(a_1\;a_2\;a_3)$, else let
$\theta'=(a_1\;a_2\;a_3)\prod_{j=2}^{j=n'}(a_{2j}\;a_{2j+1})$.
\\If $p=2m'$ for some $m'\in \mathbb{Z}$, then let
$\epsilon=\prod_{j=1}^{j=m'}(b_{2j}\;c_{2j}\;b_{2j-1}\;c_{2j-1})$.
If $p=2m'+1$, then let $\epsilon'=(b_1\;b_2\;b_3)(c_1\;c_3\;c_2)$ if
$m'=1$, else let
$\epsilon'=(b_1\;b_2\;b_3)(c_1\;c_3\;c_2)\prod_{j=2}^{j=m'}(b_{2j+1}\;c_{2j+1}\;b_{2j}\;c_{2j})$.\\If
$q=2s'$ for some $s'\in \mathbb{Z}$, then let
$\delta=\prod_{j=1}^{j=s'}(e_{2j-1}\;f_{2j}\;d_{2j-1})(e_{2j}\;f_{2j-1}\;d_{2j})$.
If $q=2s'+1$, then let
$\delta'=(d_1\;e_1\;f_2\;e_2\;f_1)(d_2\;d_3\;f_3\;e_3)$ if $s'=1$,
else let
$\delta'=(d_1\;e_1\;f_2\;e_2\;f_1)(d_2\;d_3\;f_3\;e_3)\prod_{j=2}^{j=s'}
(e_{2j}\;f_{2j+1}\;d_{2j})(e_{2j+1}\;f_{2j}\;d_{2j+1})$.\\
Finally, let  $\sigma_z=\Theta\; \Upsilon\; \Delta\; \sigma$.

\item Case 2. $r=1$.\\
We will study the following subcases:
\begin{enumerate}
\item $p>1$ and $q>1$.\\
In this case, if $p=2m'$ then  let
$\epsilon=(a_1\;b_1)(b_2\;c_2\;c_1)$ if $m'=1$, and if $m'>1$ then
let
 $\epsilon=(a_1\;b_1)(b_2\;c_2\;c_1)\prod_{j=2}^{j=m'} (b_{2j}\;c_{2j}\;b_{2j-1}\;c_{2j-1})$. On the other hand, if $p=2m'+1$, then let
$\epsilon'=(a_1\;b_1\;c_1)\prod_{j=1}^{j=m'}
(b_{2j+1}\;c_{2j+1}\;b_{2j}\;c_{2j})$. Finally, let
$\sigma_z=\Delta\;\Upsilon\;\sigma$, where $\delta$ and $\delta'$
are the same as in  Case 1.
\item $p>1$ and $q=1$.\\
Let $\sigma_z=\Upsilon \; (a_1\;d_1\;f_1\;e_1)\;\sigma$, where
$\epsilon$ and $\epsilon'$ are the same as in Case 1.
\item $p>1$ and $q=0$.\\
Then let $\sigma_z=\Upsilon\; \sigma$, where $\epsilon$ and
$\epsilon'$ are the same as in $(1)$.
\item $p=1$ and $q>1$.\\
Then $\sigma_z=(a_1\;b_1\;c_1)\; \Delta\;\sigma$,  where $\delta$
and $\delta'$ are the same as in  Case 1.
\item $p=1$ and $q=1$.\\
Then let $\sigma_z=(a_1\;b_1\;c_1\;e_1)(f_1\;d_1)\; \sigma$.
\item $p=1$ and $q=0$.\\
Then $\sigma_z=(a_1\;b_1\;c_1)\; \sigma$.
\item $p=0$ and $q>1$.\\
If $q=2s'$ then let $\delta=(d_1\;e_1\;f_2)(f_1\;a_1\;d_2\;e_2)$ if
$s'=1$ and if $s'>1$ then let
$\delta=(d_1\;e_1\;f_2)(f_1\;a_1\;d_2\;e_2)\prod_{j=2}^{j=s'}(e_{2j-1}\;f_{2j}\;d_{2j-1})(e_{2j}\;f_{2j-1}\;d_{2j})$.
On the other hand, if $q=2s'+1$ then let
$\delta'=(a_1\;d_1\;f_1\;e_1)\prod_{j=1}^{j=s'}(e_{2j}\;f_{2j+1}\;d_{2j})(e_{2j+1}\;f_{2j}\;d_{2j+1})$.
Finally, let $\sigma_z=\Delta\; \sigma$.
\item $p=0$ and $q=1$.\\
Then let $\sigma_z=(a_1\;d_1\;f_1\;e_1)\; \sigma$. \end{enumerate} 
\item Case 3. $p=1$.\\
\begin{enumerate}
\item $r>1$ and $q>1$.\\
If $r=2n'$ then let  $\theta=(a_1\;b_1\;c_1\;a_2)$ if $n'=1$ and if
$n'>1$ then let
$\theta=(a_1\;b_1\;c_1\;a_2)\prod_{j=2}^{j=n'}(a_{2j-1}\;a_{2j})$.
On the other hand, if $r=2n'+1$ then let
$\theta'=(b_1\;c_1\;a_1)\prod_{j=1}^{j=n'}(a_{2j}\;a_{2j+1})$. Let
$\sigma_z=\Theta\; \Delta\;\sigma$, where $\delta$ and $\delta'$ are
defined as in Case 1.
\item $r>1$ and $q=1$.\\
Then let $\sigma_z=(c_1\;e_1\;b_1)(f_1\;d_1)\Theta\;\sigma$, where
$\theta$ and $\theta'$ are the same as the ones defined in Case 1.
\item $r>1$ and $q=0$.\\
Then let $\sigma_z=\Theta\; \sigma$, where $\theta$ and $\theta'$
are just like the ones in $(1)$.
\item $r=0$ and $q>1$.\\
If $q=2s'$ then let
$\delta=(d_1\;f_1\;b_1\;c_1)(f_2\;d_2)(e_1\;e_2)$ if $s'=1$ and if
$s'>1$ then let
$\delta=(d_1\;f_1\;b_1\;c_1)(f_2\;d_2)(e_1\;e_2)\prod_{j=2}^{j=s'}(e_{2j-1}\;f_{2j}\;d_{2j-1})(e_{2j}\;f_{2j-1}\;d_{2j})
$. Otherwise, if $q=2s'+1$ then
$\delta'=(c_1\;e_1\;f_1\;d_1\;b_1)\prod_{j=1}^{j=s'}(e_{2j}\;f_{2j+1}\;d_{2j})(e_{2j+1}\;f_{2j}\;d_{2j+1})$.
Finally, let $\sigma_z=\Delta\;\sigma$.
\item $r=0$ and $q=1$.\\
Let $\sigma_z=(c_1\;e_1\;f_1\;d_1\;b_1)\;\sigma$.
\end{enumerate}
\item Case 4. $q=1$.\\
\begin{enumerate}
\item $r>1$ and $p>1$.\\
If $r=2n'$ then let $\theta=(d_1\;f_1\;e_1\;a_1\;a_2)$ if $n'=1$ and
if $n'>1$ then let $\theta=
(d_1\;f_1\;e_1\;a_1\;a_2)\prod_{j=2}^{j=n'}(a_{2j-1}\;a_{2j})$. If
$r=2n'+1$, then let
$\theta'=(a_1\;d_1\;f_1\;e_1)\prod_{j=1}^{j=n'}(a_{2j}\;a_{2j+1})$.
Finally, let $\sigma_z=\Theta \; \Upsilon\;\sigma$, where $\epsilon$
and $\epsilon'$ are the same as the ones defined in Case 1.
\item $r>1$ and $p=0$.\\
Then let $\sigma_z=\Theta\;\sigma$, where $\theta$ and $\theta'$ are
the same as in $(1)$.
\item $r=0$ and $p>1$.\\
If $p=2m'$ then if $m'=1$ let
$\epsilon=(e_1\;b_2\;\;b_1\;c_1\;c_2)(d_1\;f_1)$
 and if $m'>1$ then let
$\epsilon=(e_1\;b_2\;\;b_1\;c_1\;c_2)(d_1\;f_1)\prod_{j=2}^{j=m'}(b_{2j}\;c_{2j}\;b_{2j-1}\;c_{2j-1})$.
On the other hand, if $q=2m'+1$, then let
$\epsilon'=(e_1\;f_1\;d_1\;b_1\;c_1)\prod_{j=1}^{j=m'}
(b_{2j}\;c_{2j}\;b_{2j+1}\;c_{2j+1})$. Finally, let
$\sigma_z=\Upsilon\;\sigma$.
\end{enumerate}
\item Case 5. $r=0$.\\
 \begin{enumerate}
\item $p>1$ and $q>1$.\\
Let $\sigma_z=\Upsilon\;\Delta\;\sigma$, where $\delta,\;\delta',\;
\epsilon$ and $\epsilon'$ are the same as the ones defined in  Case
1.
\item $p>1$ and $q=0$.\\
Then let $\sigma_z=\Upsilon\;\sigma$, where $ \epsilon$ and
$\epsilon'$ are the same as the ones defined in  Case 1.
\item $p=0$ and $q>1$.\\
Then let $\sigma_z=\Delta\;\sigma$, where $\delta$ and $\delta'$ are
the same as the ones defined in  Case 1.
\end{enumerate}
\item Case 6. $p=0$.\\
\begin{enumerate}
\item  $r>1$ and $q>1$.\\
Then let $\sigma_z=\Theta\;\Delta\;\sigma$, where
$\delta,\;\delta',\; \theta$ and $\theta'$ are the same as the ones
defined in  Case 1.
\item $r>1$ and $q=0$.\\
Then let $\sigma_z=\Theta\;\sigma$, where $\theta$ and $\theta'$ are
the same as
the ones defined in  Case 1.\\
\end{enumerate}
\item Case 7. $q=0$.\\
We still have only the case where $r>1$ and $p>1$. Then let
$\sigma_z=\Theta\;\Upsilon\;\sigma$, where $\theta,\;\theta',\;
\epsilon$ and $\epsilon'$ are the same as the ones defined in Case
1.
\end{itemize}
Note that in each of the above cases, $dist_T(x_i,\sigma_z(x_i))\leq
2$ for $i=1,...,m$, $dist_T(c_l,\sigma_z(c_l))\leq 4$ for
$l=1,...,p$, $dist_T(f_j,\sigma_z(f_j))\leq 4$ for $j=1,...,q$ and
$dist_T(\sigma_z(u),\sigma_z(v))\leq 5$ whenever $uv$ is an edge in
$G$. Thus, it can be easily proved that $\sigma_z$ is a $(T,z)$-well
2-placement.
\end{proof}
\begin{corollary}
Consider a non star tree containing a vertex x such that $d(x)>2$.
Let $\{x_1,...,x_n\}$, $n>2$, be the neighbors of $x$. Suppose that
$T_{(x_i,x)}$ is a neighbor $F$-tree of $x$ for $i=1,..., m$, where
$2\leq m< n$. Let $T'$ be the connected component containing $x$ in
$T-\{xx_i;\;i=1,...,m\}$. Suppose that there exists a $(T',z)$-good
2-placement $\sigma$ such that $dist (x,\sigma(x))\leq 2$, where $z$
is a vertex in $T'$ not necessarily distinct from $x$, then there
exists a $(T,z)$-good 2-placement say $\sigma_z$ such that
$\sigma_z(v)=\sigma(v)$ for every $ v\in V(T')$ and
$dist(x_i,\sigma_z(x_i))\leq 2$ for $i=1,...,m$.
\end{corollary}
\begin{proof}
Let $G$ be the graph obtained by the union of all the components
that do not contain  $x$ in $T-\{xx_i;\;i=1,...,m\}$. Since $\sigma$
is a $(T',z)$-good 2-placement then it is a $(T',z)$-well
2-placement, and so, by Lemma 3.3, there exists a $(T,z)$-well
2-placement, say $\sigma_z$, such that $\sigma_z(v)=\sigma(v)$ for
every $v\in V(T')$, $dist(x_i,\sigma_z({x_i}))\leq 2$ for
$i=1,...,m$ and $dist(\sigma_z(u),\sigma_z(v))_T\leq 5$ whenever
$uv$ is an edge in $G$. Thus $\sigma_z$ is a $(T,z)$-good
2-placement.
\end{proof}
\begin{lemma}
Let $T$ be a non star tree and let $x$ be a vertex of $T$ with
$N(x)=\{x_1,...,x_n\}$, $n\geq 2$. Suppose that there exists a
$(T_{(x_i,x)}, x_i)$-well 2-placement  for $i=1,..., p$, where
$1\leq p< n$. Let $T'$ be the connected component containing $x$ in
$T-\{xx_i;\; i=1,...,p\}$. If there exists a $(T',z)$-well
2-placement, say $\sigma$, such that $dist(x,\sigma(x))\leq 3$,
where $z$ is a vertex of $T'$,  then there exists a $(T,z)$-well
2-placement, say $\sigma_z$, such that $\sigma_z(v)=\sigma(v)$ for
every $ v \in V(T')$.\end{lemma}
\begin{proof}
Let $\sigma_i$ be a  $(T_{(x_i,x)},x_i)$-well 2-placement for
$i=1,...,p$. Then the 2-placement $\sigma$ defined as follows:
\begin{center}
$\sigma_z(v)=\begin{cases}
 \sigma(v) &\text{if $v\in V(T')$}\\
 \sigma_i(v) & \text {if $v\in V(T_{x_ix})$ for $i=1,..., p$}\\
 \end{cases}$
 \end{center} is a $(T,z)$-well 2-placement.
 \end{proof}
\begin{lemma}
Let $T$ be a non star tree and let $x$ be a vertex of $T$ with
$N(x)=\{x_1,...,x_n\},$ $n\geq 4$. Let $T'$ be the connected
component containing $x$ in $T-\{xx_i;\; i=1,..., p\},\; 3\leq p<n$.
Suppose that at least two and at most $p-1$ trees  in the set
$\{T_{(x_i,x)}:\;i=1,...,p\}$ are neighbor $F$-trees of $x$ such
that for the remaining non neighbor $F$-trees in this set there
exists a $(T_{(x_i,x)},x_i)$-well 2-placement.  If there exists a
$(T',z)$-well 2-placement, say $\sigma'$, such that
$dist(x,\sigma'(x))\leq 3$, where $z$ is a vertex of $T'$ not
necessarily distinct from $x$, then there exists a $(T,z)$-well
2-placement, say $\sigma$, such that $\sigma(v)=\sigma'(v)$ for
every $ v \in V(T')$.
\end{lemma}
\begin{proof}
Suppose that $T_{(x_i,x)}$ are the  neighbor $F$-trees of $x$ for
$i=1,...,k$, where $2\leq k\leq p-1$, such that there exists a
$(T_{(x_i,x)},x_i)$-well 2-placement for $i=k+1,...,p$. Let $T''$ be
the connected component containing $x$ in $T-\{xx_i;\;
i=k+1,...,p\}$. Since $dist(x,\sigma'(x))\leq 3$  and since
$T_{(x_i,x)}$ are neighbor $F$-trees of $x$ for $i=1,...,k$,
 then, by Lemma 3.3, there exists a $(T'',z)$-well 2-placement, say
$\sigma''$, such that $\sigma'(v)=\sigma''(v)$ for every $ v \in
V(T')$. Again, since $dist(x,\sigma''(x))\leq 3$ and there exists a
$(T_{(x_i,x)},x_i)$-well 2-placement for $i=k+1,...,p$, then, by the
previous Lemma, there exists a $(T,z)$-well 2-placement, say
$\sigma$, such that $\sigma(v)=\sigma''(v)$ for every $ v \in
V(T'')$.
 \end{proof}
\begin{figure}
   \centering
 \fbox{\includegraphics[width=0.7\textwidth]%
    {secondfigure}}
    \center{\figurename{\;1}}
\end{figure}

\begin{lemma}
Let $T$ be one of the  trees in Fig. 1 such that $n\geq 2$ and
$n'\geq 3$. Then there exists  a $(T,x_1)$-good 2-placement and a
$(T, v)$-well 2-placement for every vertex $v$ of $T$.
\end{lemma}
\begin{proof}
For each tree $T$  in Fig. 1, we give below a $(T,x_1)$-good
2-placement and a $(T, v)$-well 2-placement for every vertex $v$ of
$T$.
\\
$\sigma=(x_1\;x\;y_1\;y_2\;y)$ is  a $(T_A,x_1)$-good 2-placement
 and a $(T_A,v)$-well 2-placement
for every $ v \in V({T_A})$.\\
There are four choices for choosing a vertex $v$ of $T_B$, either
$x_1$, $x$, $y$ or $y_1$. If $n=2$, then
$\sigma=(x_2\;y_2)(y_1\;y\;x_1\;x)$ is a $(T_B,x_1)$-good
2-placement
 and a $(T_B,v)$-well 2-placement, for every $ v \in \{x_1,x,y,y_1\}$.
Otherwise, suppose that $n\geq 3$. If $n=2k+1$ for some $ k\in
\mathbb{N}$, then
$\sigma=(x_1\;x\;y_1\;y_2\;y)\prod_{i=1}^{i=k}(x_{2i}\;x_{2i+1})$ is
a $(T_B,x_1)$-good 2-placement
 and a $(T_B,v)$-well 2-placement $\forall\; v \in V({T_B})$. On
the other hand, if $n=2k$ for some $k\in \mathbb{N}$, then
$\sigma=(x_2\;y_2)(y_1\;y\;x_1\;x)\prod_{i=2}^{i=k}(x_{2i-1}\;x_{2i})$
is a $(T_B,x_1)$-good 2-placement
 and a $(T_B,v)$-well 2-placement, for every $ v \in \{x_1,x,y,y_1\}$.\\
 Finally, if $n'=2k+1$ for some $k\in \mathbb{N}$, then
$\sigma=(x_1\;x\;y_1\;y)\prod_{i=1}^{i=k}(y_{2i}\;y_{2i+1})$ is a
$(T_C,x_1)$-good 2-placement
 and a $(T_C,v)$-well 2-placement
$\forall\; v \in V({T_C})$.\\
And if $n'=2k$ for some $k\in \mathbb{N},\; k\geq 2$, then
$\sigma=(x_1\;x\;y_1\;y_2\;y)\prod_{i=2}^{i=k}(y_{2i-1}\;y_{2i})$
 is  a $(T_C,x_1)$-good 2-placement
 and a $(T_C,v)$-well 2-placement
 for every $ v \in V({T_C})$.
\end{proof}
\begin{figure}[h]
   \centering
 \fbox{\includegraphics[width=0.7\textwidth]%
    {thirdfigure}}
    \center{\figurename{\;2}}
\end{figure}
\begin{lemma} Let $T$ be one of the  trees in Fig. 2.  Then there exists a
$(T, x)$-well 2-placement and a $(T,x_1)$-well 2-placement.
\end{lemma}
\begin{proof}
For each pair $(T, x)$ and $(T,x_1)$ in Fig. 2, we give below a $(T,
x)$-well 2-placement $\sigma_x$ and a $(T,x_1)$-well 2-placement $\sigma_{x_1}$.\\
For $T_A$, $\sigma_{x}=\sigma_{x_1}=(x_1\;x\;y_2)(x_2\;y_1\;z_2)$.\\
For $T_B$, $\sigma_{x}=\sigma_{x_1}=(x_1\;x\;y_2)(y_1\;z_2)(x_2\;z_1)$.\\
For $T_C$, $\sigma_{x}=\sigma_{x_1}=(x_1\;x\;y_2)(y_1\;z_2)(x_2\;w_1\;z_1)$.\\
For $T_D$, $\sigma_{x}=\sigma_{x_1}=(x_1\;x\;y_2)(x_2\;y_1\;w_1\;z_1)$.\\
For $T_E$,
$\sigma_{x}=\sigma_{x_1}=(x_1\;x\;y_2)(z_2\;w_2\;y_1)(z_1\;x_2\;w_1)$.
\end{proof}

\begin{lemma}
Consider a non star tree $T$ containing an edge  $xy$ such that
$d_T(x)=1$ and $d_T(y)\geq 3$. Suppose that all the neighbor trees
of $y$ distinct from $T_{(x,y)}$ are neighbor $F$-trees not
isomorphic to $P_1$, then there exists a $(T,x)$-well 2-placement.
\end{lemma}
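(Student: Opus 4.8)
The plan is to use Lemma~3.3 to strip all but one of the neighbor $F$-trees of $y$, reducing $T$ to a short path whose well $2$-placement is furnished by Theorem~3.1, and to treat by hand the handful of trees that this reduction cannot reach. First I would record the shape of $T$. Since $d_T(x)=1$, the vertex $y$ is the unique neighbor of $x$; write $N(y)=\{x,x_1,\dots,x_k\}$ with $k=d(y)-1\ge 2$. By hypothesis each $T_{(x_i,y)}$ is a neighbor $F$-tree not isomorphic to $P_1$, hence a path $P_2$ or $P_3$ having $x_i$ as an end vertex. Thus $T$ is a spider centred at $y$, with one leg $yx$ of length $1$ and $k$ further legs (through the $x_i$) of length $2$ or $3$; in particular $T$ is automatically non-star. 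I would then split on $d(y)$.

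\textbf{Case $d(y)\ge 4$} (so $k\ge 3$). Relabel so that the leg through $x_1$ is retained, and let $T'$ be the component of $y$ in $T-\{yx_2,\dots,yx_k\}$. Then $T'$ is the path $x\,y\,x_1\cdots$ on $4$ or $5$ vertices, i.e.\ $T'\cong P_4$ or $P_5$, a non-star path. By Theorem~3.1 there is a $(T',x)$-well path $2$-placement $\sigma$; comparing the two lists of conditions, a well path $2$-placement of a path is in particular a well $2$-placement of it, since its requirement of distance $\le 3$ at every leaf is stronger than the distance-$\le 4$ requirement of the tree version. Also $y\in N_{T'}(x)$, so the path version gives $dist_{T'}(y,\sigma(y))\le 3$. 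I would now invoke Lemma~3.3 with its central vertex taken to be $y$ (of degree $k+1>2$), its special vertex $z:=x\in V(T')$, and the $m:=k-1\ge 2$ deleted neighbor $F$-trees $T_{(x_2,y)},\dots,T_{(x_k,y)}$ (note $2\le m<k+1=d(y)$); Lemma~3.3 then yields a $(T,x)$-well $2$-placement.

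\textbf{Case $d(y)=3$} (so $k=2$). Here the reduction is unavailable, since deleting both $yx_1$ and $yx_2$ would leave the star $P_2$. But now $T$ is one of only three spiders, with leg lengths $(1,2,2)$, $(1,2,3)$, or $(1,3,3)$, and for each I would simply write down a $(T,x)$-well $2$-placement and check the finitely many conditions: for example, letting $a_i$ denote the vertex at distance $2$ from $y$ on the leg through $x_i$, the permutation $\sigma=(x\;x_1)(y\;a_2\;a_1\;x_2)$ works for the spider with leg lengths $(1,2,2)$, and the other two are handled by analogous finite checks (or, in case these trees appear among those of Figures~1 and~2, directly from Lemmas~3.6 and~3.7).

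The main obstacle, and the only step that is not bookkeeping, is controlling what $T'$ is allowed to be: one must always keep a leg of length $\ge 2$, so that $T'$ stays a genuine non-star path ($P_4$ or $P_5$) to which Theorem~3.1 can be applied, which is exactly why $d(y)=3$ escapes the reduction and has to be taken as a base case. The accompanying point to be careful about is that the path $2$-placement delivered by Theorem~3.1 meets the precise hypothesis $dist_{T'}(y,\sigma(y))\le 3$ demanded by Lemma~3.3 --- this is where the remark $y\in N_{T'}(x)$ is used. Verifying that each exhibited permutation in the $d(y)=3$ case is a $2$-placement satisfying the distance and cycle-length conditions is then routine.
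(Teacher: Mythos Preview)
Your proposal is correct and follows essentially the same approach as the paper: split on $d(y)$, for $d(y)\ge 4$ keep one leg so that $T'$ is $P_4$ or $P_5$, get a well path $2$-placement with $dist(y,\sigma(y))\le 3$ and feed it into Lemma~3.3, and for $d(y)=3$ exhibit explicit permutations for the three small spiders. The only cosmetic difference is that the paper invokes Lemma~3.1 (which gives $dist(v,\sigma(v))\le 3$ for \emph{all} $v$ on the short path) rather than Theorem~3.1, but your observation that $y\in N_{T'}(x)$ already forces $dist(y,\sigma(y))\le 3$ from the definition of a well path $2$-placement makes Theorem~3.1 sufficient.
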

\begin{proof}
Let $\{y_1,...,y_n\}$, where $n\geq 2$, be the neighbors of $y$
distinct from $x$. We need to consider the following two cases
concerning the degree of $y$:
\begin{enumerate}
\item $d(y)>3$.\\
Let $T_0$ be the connected component containing $x$ in $T-\{yy_i;
i=2,..., n\}$, then $T_0$ is  isomorphic to $P_4$ or $P_5$, and so,
by lemma 3.1, there exists a $(T_0,x)$-well path 2-placement, say
$\sigma_0$, such that $dist(v,\sigma_0(v))\leq 3$ for every vertex
$v$ of $T_0$.
Since $T_{(y_i,y)}$ are neighbor $F$-trees of $y$ for $i=2,..., n$, then,  by Lemma 3.3, there exists a $(T,x)$-well 2-placement.\\

\item $d(y)=3$\\
Then $T$ is isomorphic to one of the following trees in Fig. 3.
\begin{figure}[h]
   \centering
 \fbox{\includegraphics[width=0.7\textwidth]%
    {fourthfigure}}
    \center{\figurename{\;3}}
\end{figure}
For each pair $(T, x)$ in Fig. 3, we give below a $(T, x)$-well
2-placement $\sigma$:\\
$\sigma=(x\;y\;z_2)(y_1\;z_1\;y_2)$ is a $(T_A,x)$-well 2-placement.\\
$\sigma=(x\;y\;z_1)(y_1\;z_2)(y_2\;w_2)$ is a $(T_B,x)$-well 2-placement.\\
$\sigma=(x\;y\;z_1)(y_1\;z_2\;w_1)(w_2\;y_2)$ is a $(T_C,x)$-well
2-placement.
\end{enumerate}
\end{proof}
\begin{figure}[h]
   \centering
 \fbox{\includegraphics[width=0.7\textwidth]%
    {fifthfigure}}
    \center{\figurename{\;4}}
\end{figure}
\begin{lemma}
Consider a non star tree containing a vertex $x$ with $d(x)\geq 3$.
Suppose that all the neighbor trees of $x$ are isomorphic either to
$P_1$ or $P_2$ such that $x$ has at most one neighbor tree
isomorphic to $P_1$, then there exists a $(T,x)$-well 2-placement.
\end{lemma}
\begin{proof}
Let $N(x)=\{x_1,...,x_n\},$  $ n\geq 3$. If $d(x)>3$, then let $T_0$
be the connected component containing $x$ in $T-\{xx_i;\;i=3,...,
n\}$, then $T_0$ is isomorphic either to $P_4$ or $P_5$. Hence, by
Lemma 3.1, there exists a $(T_0,x)$-well path 2-placement, say
$\sigma_0$. Since $dist(x,\sigma_0(x))\leq 2$ and $T_{(x_i,x)}$ are
neighbor $F$-trees for $i=3,...,n$, then, by Lemma 3.3 ,there exists
a $(T,x)$-well 2-placement. On the other hand, if $d(x)=3$, then $T$
is isomorphic  to one of the trees in Fig. 4. For each
pair $(T, x)$ in Fig. 4, we give below a $(T, x)$-well 2-placement $\sigma$:\\
 For $T_A$, $\sigma=(x\;y_3)(y_2\;x_2\;x_1)(x_3\;y_1)$\\
For $T_B$, $\sigma=(x\;y_1)(x_1\;y_2)(x_3\;x_2)$\\
 \end{proof}
\begin{lemma}
 Consider a non star tree $T$ containing an edge  $xy$ such that
$d_T(x)=1$ and $d_T(y)\geq 3$. Suppose that all the neighbor trees
of $y$ other than $T_{(x,y)}$ are neighbor $F$-trees not isomorphic
to $P_1$.
 Then there exists a $(T,x)$-good 2-placement.
 \end{lemma}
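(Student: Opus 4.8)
The plan is to run the same two-case dichotomy as in the proof of Lemma 3.8, replacing every ``well'' ingredient by its ``good'' counterpart. Write $N(y)=\{x,y_1,\dots,y_k\}$ with $k=d(y)-1\geq 2$; since each $T_{(y_i,y)}$ is a neighbor $F$-tree and is not $P_1$, it is a copy of $P_2$ or of $P_3$ (with $y_i$ an end vertex in the $P_3$ case). Note first that a $(T,x)$-good $2$-placement $\sigma$ is forced to send $x$ to $y$ (this is condition $3$ together with $d(x)=1$), after which the $2$-placement condition forces $y\sigma(y)\notin E(T)$, that is $\sigma(y)\notin N(y)$; combined with condition $4$ this gives $dist(y,\sigma(y))=2$ exactly. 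This extra rigidity relative to the ``well'' case is what the construction must respect.

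For the case $d(y)>3$ (so $k\geq 3$): let $T_0$ be the component of $x$ in $T-\{yy_i:\ i=2,\dots,k\}$. Then $T_0$ is the path $x-y-T_{(y_1,y)}$, which is $P_4$ when $T_{(y_1,y)}\cong P_2$ and $P_5$ when $T_{(y_1,y)}\cong P_3$; in either case $x$ is an end vertex of $T_0$, hence not a bad vertex. By Theorem 3.2 (equivalently, by Lemma 3.2) there is a $(T_0,x)$-good path $2$-placement $\sigma_0$, which is in particular a $(T_0,x)$-good $2$-placement of the path $T_0$ (its condition $4$ gives leaf displacement $\leq 2\leq 4$), with $dist(x,\sigma_0(x))=1$ and, since $y\in N(x)$ in $T_0$, $dist(y,\sigma_0(y))\leq 2$. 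Now $T$ is recovered from $T'=T_0$ by re-attaching the $k-1$ neighbor $F$-trees $T_{(y_2,y)},\dots,T_{(y_k,y)}$ via the edges $yy_i$; since $2\leq k-1<k+1=d(y)$ and $dist(y,\sigma_0(y))\leq 2$, Corollary 3.1 (applied with $y$ as the high-degree vertex and $z=x$) yields a $(T,x)$-good $2$-placement.

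For the case $d(y)=3$ (so $k=2$): each of $T_{(y_1,y)},T_{(y_2,y)}$ is $P_2$ or $P_3$, so $T$ is one of the three trees $T_A,T_B,T_C$ of Fig. 3, and Corollary 3.1 is of no help here (deleting two of $y$'s three neighbor $F$-trees leaves only the edge $xy$). For each of $T_A,T_B,T_C$ I would exhibit an explicit permutation that sends $x$ to $y$ and $y$ to a vertex at distance $2$ inside one of the two neighbor $F$-trees, groups the remaining vertices into one or two further cycles, and then verify the five conditions directly: that it is a $2$-placement, that each induced edge joins vertices at distance $\leq 5$ in $T$, that $dist(y,\sigma(y))=2$ (the sole neighbour of $x$), and that every leaf is displaced by distance $\leq 4$. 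For $T_A$ — two pendant edges $y_1z_1,\ y_2z_2$ at $y$ — the single cycle $(x\;y\;z_1\;y_2\;z_2\;y_1)$ already works; for $T_B$ and $T_C$ one makes the analogous but slightly longer choices, being careful to keep $dist(y,\sigma(y))=2$ rather than merely $\leq 3$.

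The only step demanding real care is this last piece of case analysis for $T_A,T_B,T_C$: the ``good'' requirements $\sigma(T)\subseteq T^5$, $dist(x,\sigma(x))=1$ and $dist(y,\sigma(y))\leq 2$ are each strictly tighter than their analogues in Lemma 3.8, so the explicit permutations must be chosen and checked with those sharper bounds in mind. Once the case $d(y)>3$ is reduced to the observation that $T_0\cong P_4$ or $P_5$ with $x$ an end vertex, Theorem 3.2 and Corollary 3.1 finish it with no further effort.
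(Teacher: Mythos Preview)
Your approach is essentially the paper's own: split on $d(y)>3$ versus $d(y)=3$, in the first case build a $(T_0,x)$-good $2$-placement on the path $T_0=x\!-\!y\!-\!T_{(y_1,y)}$ with $dist(y,\sigma_0(y))=2$ and then apply Corollary 3.1, and in the second case handle the three Fig.~3 trees $T_A,T_B,T_C$ by explicit permutations. The only cosmetic differences are that the paper writes down $\sigma_0$ explicitly (namely $(x\;y\;z_1\;y_1)$ on $P_4$ and $(x\;y\;z_1\;w_1\;y_1)$ on $P_5$) rather than quoting Lemma 3.2, and that it supplies concrete permutations for all of $T_A,T_B,T_C$ --- your $T_A$ choice $(x\;y\;z_1\;y_2\;z_2\;y_1)$ works but differs from the paper's $(x\;y\;z_1)(y_1\;y_2\;z_2)$, and for $T_B,T_C$ the paper uses $(x\;y\;z_1\;y_1\;z_2)(y_2\;w_2)$ and $(x\;y\;z_1\;w_1\;y_1\;y_2\;w_2\;z_2)$ respectively.
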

 \begin{proof}
 Let $\{y_1,...,y_n\}$ be the set of neighbors
 of $y$ distinct from $x$, where $n\geq 2$. We are going to study two cases:
\begin{itemize}
\item $y$ has a neighbor tree isomorphic to $P_2$.\\
Without loss of generality, suppose that $T_{(y_1,y)}=y_1z_1$. If
$n>2$, then let $T_0=T-\{yy_i:\; i=2,....,n\}$
$\sigma_0=(x\;y\;z_1\;y_1)$ be a $(T_0,x)$-good 2-placement. Since
$dist (y,\sigma_0(y))=2$, then, by Corollary 3.1, there exists a
$(T,x)$-good 2-placement. Otherwise, that is, $n=2$, then $T$ is
either isomorphic to $T_A$ or $T_B$
in Fig. 3. For each case, we give below a $(T, x)$-good 2-placement $\sigma$:\\
 For $T_A$, $\sigma=(x\;y\;z_1)(y_1\;y_2\;z_2)$. \\
For $T_B$, $\sigma=(x\;y\;z_1\;y_1\;z_2)(y_2\;w_2)$.
\item $y$ has a neighbor tree isomorphic to $P_3$.\\
Without loss of generality, suppose that $T_{(y_1,y)}=y_1z_1w_1$. If
$n>2$, then let let $T_0=T-\{yy_i:\; i=2,....,n\}$ and
$\sigma_0=(x\;y\;z_1\;w_1\;y_1)$ be a $(T_0,x)$-good 2-placement.
Since $dist (y,\sigma_0(y))=2$, then, by Corollary 3.1, there exists
a $(T,x)$-good 2-placement. Otherwise, that is, $n=2$, then $T$ is
isomorphic either to $T_B$ or $T_C$ in Fig. 3. We showed above that
there exists a $(T_B,x)$-good 2-placement. For $T_C$,
$\sigma=(x\;y\;z_1\;w_1\;y_1\;y_2\;w_2\;z_2)$ is a
$(T_C,x)$-good 2-placement.\end{itemize}\end{proof} 
\begin{lemma}
Let $T$ be one of the  trees in Fig. 5 such that $n\geq 2$.  Then
there exists a $(T,x)$-good 2-placement $\sigma_x$ such that
$dist(a_1,\sigma_x(a_1))\leq 2$ whenever $T$ is isomorphic to $T_E$,
$T_F$ or $T_G$.
\end{lemma}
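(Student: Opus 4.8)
The plan is to follow the template used repeatedly in this section — most directly the one in Lemma~3.6 for the trees of Fig.~1 — and simply exhibit, for each of the finitely many tree shapes $T_A,\dots,T_G$ drawn in Fig.~5, an explicit fixed point free permutation $\sigma_x$ of $V(T)$ written in cycle notation, and then check directly the five defining conditions of a $(T,x)$-good 2-placement: that $\sigma_x$ is a 2-placement, that $\sigma_x(T)\subseteq T^5$, that $dist(x,\sigma_x(x))=1$, that $dist(y,\sigma_x(y))\le 2$ for every $y\in N(x)$, and that $dist(y,\sigma_x(y))\le 4$ for every leaf $y$. For the three shapes $T_E$, $T_F$, $T_G$ one checks in addition that the designated vertex $a_1$ moves by at most $2$; the point of this sharper bound is that $a_1$ is meant to serve later as the attachment vertex of a neighbor $F$-tree, so that this lemma can be combined with Lemma~3.3 / Corollary~3.1 when $a_1$'s subtree is glued back on.

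Since every tree in Fig.~5 depends on a parameter $n\ge 2$ — a pendant path, or a collection of pendant leaves, whose size grows with $n$ — I would, exactly as in Lemma~3.6, split each case by the parity of $n$: fix a core permutation on the bounded part of the tree (a single cycle through $x$, chosen so as to meet conditions 3--5 and to avoid edge conflicts), and append to it a product $\prod_i(\,\cdot\;\cdot\,)$ of transpositions that pair up the remaining parametric vertices among themselves, with the one leftover vertex in the odd case absorbed into the core as a $3$-cycle. Because those parametric vertices are leaves, or ends of short pendant paths, all hanging near a common vertex, each such transposition moves its two vertices a distance at most $2$, so condition~5 (and the $a_1$-condition, when relevant) survives, and the father-edge of such a vertex maps to a pair at distance at most a small constant $\le 5$, so the 2-placement condition and the $T^5$ condition survive as well.

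The work is routine, but the bookkeeping near $x$ is where some care is needed: the 2-placement requirement forbids $\sigma_x(u)\sigma_x(v)$ from being an edge for every edge $uv$, condition~3 pins $\sigma_x(x)$ to an actual neighbour of $x$, condition~4 confines the images of $N(x)$ to radius-$2$ balls, and for $T_E,T_F,T_G$ the $a_1$-condition adds a further radius-$2$ constraint; these pull against one another locally, so the core cycle through $x$ (together with the verification that the appended transpositions do not collide with it) must be chosen by hand for each of $T_A,\dots,T_G$. I expect that local case analysis around $x$ to be the only genuine obstacle, and — as elsewhere in the paper — it is dispatched by displaying the explicit permutation for each shape and leaving the finite, mechanical verification to the reader.
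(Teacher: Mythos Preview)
Your plan is correct and matches the paper's proof essentially line for line: the paper simply displays an explicit permutation $\sigma_x$ for each shape $T_A,\dots,T_G$, using a core cycle through $x$ and, for the two parametric shapes $T_B$ and $T_D$, appending a product of transpositions on the extra leaves with the parity split you describe. One small correction: not every tree in Fig.~5 carries a parameter --- $T_A$, $T_C$, $T_E$, $T_F$, $T_G$ are fixed small trees handled by a single explicit permutation each, and only $T_B$ and $T_D$ need the even/odd treatment.
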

\begin{proof}
\begin{figure}[h!]
   \centering
 \fbox{\includegraphics[width=0.7\textwidth]%
    {lastfigure13}}
    \center{\figurename{\;5}}
\end{figure}
For each pair $(T, x)$ in Fig. 5, we give below a $(T,x)$-good
2-placement $\sigma_x$.
\\
For $T_A$, $\sigma_{x}=(x\;z_1\;z_2\;w\;l\;y\;z)$\\
For $T_B$, if $k=2p$ for some $p\in \mathbb{N}$, then
$\sigma_{x}=(x\;w\;l\;y\;y_1\;y_2)$   if $k=2$, and if $k>2$, then
$\sigma_{x}=(x\;w\;l\;y\;y_1\;y_2)\prod_{i=2}^{i=p}
(y_{2i-1}\;y_{2i})$. If $k=2p+1$, then
$\sigma_{x}=(x\;w\;l\;y\;y_1)\prod_{i=1}^{i=p}
(y_{2i+1}\;y_{2i})$ \\
For $T_C$, $\sigma_{x}=(x\;y\;w\;z)(y_1\;y_2\;y_3)$.\\
For $T_D$, if $k=2p$ for some $p\in \mathbb{N}$, then
$\sigma_{x}=(x\;y\;l_1\;l_2\;l\;w\;z)$ if $k=2$, and if $k>2$,
$\sigma_{x}=(x\;y\;l_1\;l_2\;l\;w\;z)\prod_{i=2}^{i=p}
(l_{2i-1}\;l_{2i})$. If $n=2p+1$ , then
$\sigma_{x}=(x\;y\;l_1\;l\;w\;z)\prod_{i=1}^{i=p}
(l_{2i+1}\;l_{2i})$.\\
For $T_E$, $\sigma_x=(x\; y_2\; y_1\; x_1)(a_1\;b_1\;a_2)$.\\
For $T_F$, $\sigma_x=(x\; y_2\; y_1\; x_1)(a_1\;c_1\;b_1\;a_2)$.\\
For $T_G$, $\sigma_x=(x\; y_2\; y_1\; x_1)(a_1\;c_1\;d_1)(b_1\;a_2)$.\\
\end{proof}

\begin{figure}[h!]
   \centering
 \fbox{\includegraphics[width=0.5\textwidth]%
    {lastlastfigure}}
    \center{\figurename{\;6}}
\end{figure}

\begin{lemma}
For the trees shown in Fig. 6, there exists a $(T_A,x)$-good
2-placement, a $(T_A,x')$-good 2-placement and a $(T_B,x)$-good
2-placement.
\end{lemma}
\begin{proof}
$\sigma=(x\; y\; x'\; w\;l)$ is a  $(T_A,x)$-good 2-placement and a
$(T_A,x')$-good 2-placement.\\
$\sigma=(x\;y\;z_1\;w_1\;z_2)(w_2\;z)$ is a $(T_B,x)$-good
2-placement.
\end{proof}\\\\
\textbf{Proof of Theorem 1.7}.\\
 The proof is by induction on the
order of $T$. For $n=4$, there is only one non star tree, which is
$P_4$, then, by Lemma 3.1, there exists  a $(T,v)$-well 2-placement
for every $ v \in V(P_4)$. Suppose that the theorem holds for
$n'<n$, $n\geq 5$, and let $T$ be a non star tree of order $n$. Let
$x$ be a vertex of $T$. If $T$ is a path, then, by Theorem 3.1,
there exists a $(T,x)$-well path 2-placement. Otherwise, let $v$ be
a vertex of $T$ and let $\{v_1,...,v_m\}$, $m\geq 2$, be the
neighbors of $v$. Suppose that $\{v_1,...,v_p\}$, $1\leq p< m$, are
enumerated in such a way that $x$ and $v$ are in the same connected
component in $T-\{vv_i:i=1,...,p\}$ and let $T'$ be this connected
component.
Note that $x$ and $v$ may be the same vertex. \\\\
\textbf{Claim 1.} If there exists a $(T',x)$-well 2-placement
$\sigma$ such that $dist(v,\sigma(v))\leq 3$ and $T_{(v_i,v)}$ are
non star trees
for $i=1,...,p$, then there exists a $(T,x)$-well 2-placement.\\\\
\textbf{Proof.} Since $T_{(v_i,v)}$ is a non star tree for
$i=1,...p$, then, by induction, there exists a
$(T_{(v_i,v)},v_i)$-well 2-placement. Thus,  by Lemma 3.4,
there exists a $(T,x)$-well 2-placement.\\\\
\textbf{Claim 2.} If $2\leq p< m$, at least two neighbor trees in
the set $\{T_{(v_i,v)}:\; i=1,...,p\}$ are neighbor $F$-trees of $v$
 such that the remaining neighbor trees in the set are non star trees and if there  exists a $(T',x)$-well 2-placement $\sigma$ such that
$dist(v,\sigma(v))\leq 3$,
then there exists a $(T,x)$-well 2-placement.\\\\
\textbf{Proof.} If $T_{(v_i,v)}$ is a non star tree for some $i$,
$1\leq i\leq p$, then, by induction, there exists a
$(T_{(v_i,v)},v_i)$-well 2-placement. Thus, since at least two
neighbor trees in the set $\{T_{(v_i,v)}:\; i=1,...,p\}$, are
neighbor $F$-trees of
$v$,  there exists, by Lemma 3.3 and Lemma 3.5, a $(T,x)$-well 2-placement.\\\\
From now on, we shall assume that we can't apply neither Claim 1 nor
Claim 2 on any vertex $v$ of $T$. We will study two cases concerning
the degree
of $x$.\\\\
\textbf{Case 1.} $d(x)=1$.\\
Let $y$ be the father of $x$. If $y$ is the father of another leaf,
say $\alpha$, then let $T'=T-\{x\}$. Note that $T'$ is a non star
tree since otherwise $T$ is a star. So, there exists a
$(T',\alpha)$-well 2-placement, say $\sigma'$. The 2-placement
$\sigma$ defined as follows:
\begin{center}
$\sigma(v)=\begin{cases}
 \sigma'(v) &\text{if $v\in V(T)-\{\alpha, x\}$}\\
 x & \text {if $v=\alpha$}\\
 \sigma'(\alpha)& \text{if $v=x$}\\
 \end{cases}$
 \end{center} is a $(T,x)$-well 2-placement.
 Otherwise, suppose that $y$ is the father of the leaf $x$ only. If
 there exists a set of leaves, say $\{\alpha_1,...,\alpha_k\}$, $\;k\geq 2$,
 such that all of these leaves have the same father, say $\beta$, with $d(\beta)=k+1$, then
 let $T'=T- \{\alpha_1,...,\alpha_k\}$. If $T'$ is a non
 star tree then there exists a $(T',x)$-well 2-placement, say
 $\sigma_x$, such that $dist(\beta,\sigma_x(\beta))\leq 4$, since
 $\beta$ is a leaf in $T'$, and so $\sigma=\sigma_x(\alpha_1...\;\alpha_k)$ is a
 $(T,x)$-well 2-placement. Else, that is  $T'$ is a star, then $T$ is isomorphic to $T_A$ in Fig. 1 if
 $k=2$,
 and if $k>2$ then $T$  is isomorphic to $T_C$ in Fig. 1 for $n'=k$. Thus, by Lemma 3.6, there
 exists a $(T,x)$-well 2-placement. Otherwise, suppose that the set
 $\{\alpha_1,...,\alpha_k\}$ doesn't exist. Hence, we can remark that for any edge $ab$ in $E(T)$, $T_{(a,b)}$
 is either a neighbor $F$-tree of $b$ or a non star tree. Since $T$ is
 not a path, then there exists a vertex in $T$ with degree strictly greater than two. Let $z$ be the first
 vertex away from $x$ such that $d(z)>2$, $z'$ be the nearest neighbor of
$z$ to $x$ and let $\{z_1,...,z_m\}$, $m\geq 2$, be the neighbors of
$z$ distinct from $z'$. Note that $T_{(z',z)}$ is a path and
$T_{(z,z')}$ is a non star tree since otherwise the set
$\{\alpha_1,...,\alpha_k\}$, which is described above,  exists.
$T_{(z',z)}$ is a path of length at most two, since otherwise
 there exists, by Theorem 3.1, a $(T_{(z',z)},x)$-well path 2-placement, say $\sigma_x$, such that $dist(z',\sigma_x(z'))\leq
3$, and so we can apply Claim 1 on $z'$. Suppose first that the path
$T_{(z',z)}$ is of length zero,
 that is $x$ is a neighbor of $z$, then  $z$ is the father of the leaf $x$ only.
  If $z$ has a non neighbor $F$-tree, then suppose that $T_{(z_1,z)}$ is a non star tree.
 Since $T_{(z,z_1)}$ is a non star tree, then there exists a $(T_{(z,z_1)},x)$-well 2-placement $\sigma'$ such that
 $dist(z,\sigma'(z))\leq 3$, and so we can apply Claim 1 on $z$, a contradiction. Hence, all the neighbor trees of $z$ are
 neighbor $F$-trees and so there exists a $(T,x)$-well 2-placement by
 Lemma 3.8. Suppose now that the path $T_{(z',z)}$ is  of length two or one.
 If $z$ has a unique neighbor $F$-tree or
 at least three neighbor $F$-trees other that $T_{(z',z)}$, then
 suppose that $T_{(z_1,z)}$ is a neighbor $F$-tree and let $T'$ be
 the connected component containing $x$ in $T-\{zz_i;\;
 i=2,...,m\}$. Thus, $T'$ is a path of length at least three and at
 most six and so, by Lemma 3.1, there exists a $(T',x)$-well path
 2-placement $\sigma'$ such that $dist(v,\sigma'(v))\leq 3$ for every vertex $v$ of $T'$. Hence,
 we can apply Claim 1 or Claim 2 on $z$, a contradiction. Thus,
  $z$ has no neighbor $F$-tree  or $z$ has only two neighbor $F$-trees distinct from $T_{(z',z)}$.
 If $T_{(z',z)}$ is a path of length two, then let $T'$ be the connected component
 containing $x$ in $T-\{zz_i;\; i=1,...,m\}$. Since $T'$ is a path of length three, then there exists a
 $(T',x)$-well path 2-placement $\sigma'$, by Lemma 3.1, such that
 $dist(z,\sigma'(z))\leq 3$, and so we can apply Claim 1 or Claim 2 on $z$, a
 contradiction. Thus,
 $T_{(z',z)}$ is a path of length one, that is $z'$ is the father of
 $x$. If $z$ has no neighbor $F$-tree distinct from $T_{(z',z)}$, then
  each neighbor of $z$ has at least two neighbors. Let $\{a_1,...,a_q\}$, $q\geq 1$, be the neighbors of $z_1$ distinct from
 $z$. If $z_1$ has at least two neighbor $F$-trees or no neighbor $F$-tree distinct from $T_{(z,z_1)}$, then
 let $T'$ be the connected component
 containing $x$ in $T-\{zz_j,\;z_1a_i;\;j=2,..., m\; and \; i=1,...,q\}$. Thus, $T'$ is
 a path of length three and so, by Lemma 3.1, there exists a
 $(T',x)$-well path 2-placement $\sigma'$ such that
 $dist(v,\sigma'(v))\leq 3$ for every vertex $v$ of $T'$. Since $T_{(z_j,z)}$ are non neighbor $F$-trees for $j=2,...,m$, then, by Lemma
 3.4, there exists a $(T'',x)$-well 2-placement $\sigma''$,
 where $T''$ is the connected component containing $x$ in
 $T-\{z_1a_i;\; \;i=1,...,q \}$, such that
 $\sigma''(v)=\sigma'(v)$ for every vertex $v$ of $T'$  and so we can either apply Claim 1 or Claim 2 on $z_1$, a contradiction.
Thus, $z_1$ has a unique neighbor
 $F$-tree distinct from $T_{(z,z_1)}$, say $T_{(a_1,z_1)}$. Let $T'$ be the connected component containing $x$ in
 $T-\{zz_j,\;z_1a_i;\; j=2,...,m\; and\;i=2,...,q \}$, then $T'$ is a path of length
 at least  four and at most six, and so, by Lemma 3.1, there exists
 a $(T',x)$-well path 2-placement  $\sigma'$ such that
 $dist(v,\sigma'(v))\leq 3$ for every vertex $v$ of $T'$. Thus, by Lemma
 3.4, there exists a $(T'',x)$-well  2-placement $\sigma''$,
 where $T''$ is the connected component containing $x$ in
 $T-\{z_1a_i;\; \;i=2,...,q \}$, such that
 $\sigma''(v)=\sigma'(v)$ for every $v$ of $T'$ and so we can either apply Claim 1 on $z_1$, a contradiction.
 Thus, $z$ has only two neighbor $F$-trees distinct from
 $T_{(z',z)}$. If $m>2$, then suppose that $T_{(z_1,z)}$ and
 $T_{(z_2,z)}$ are the  neighbor $F$-trees of $z$. The tree $T'$, which is the connected component containing $x$ in $T-\{zz_i;\;
 i=3,...,m\}$,
 is either isomorphic to one of the trees in Fig. 2 or to the tree $T_A$ in Fig. 1, and so, by Lemma 3.6 and Lemma 3.7,
 there exists a $(T',x)$-well 2-placement $\sigma'$ such that
 $dist(z,\sigma'(z))\leq 3$. Thus, we can apply Claim 1 on $z$, a
 contradiction.
Hence, $m=2$ and so $T$ is  isomorphic to one of the trees in Fig.
2, and so, by Lemma 3.7, there
 exists a $(T,x)$-well 2-placement.
\\\\
 \textbf{Case 2.} $d(x)>1$.\\
If $x$ or any of its neighbors is a father of at least two leaves,
say $\alpha_1$ and $\alpha_2$, then let
$T'=T-\{\alpha_1,\alpha_2\}$. $T'$ is a star, since otherwise there
exists a $(T',x)$-well 2-placement, say $\sigma'$, such that
$dist(v,\sigma'(v))\leq 3$ for every $v\in \{ x\cup N(x)\}$ and so
we can  apply Claim  2 on the father of $\alpha_1$ and $\alpha_2$, a
contradiction. Hence, $T$ is isomorphic to $T_A$ or $T_B$ in Fig. 1,
 and so, by Lemma 3.6, there exists a $(T,x)$-well 2-placement. Else,
 suppose  that neither $x$ nor any  of its neighbors is a father of at least two
 leaves.
 If there exists a set of leaves, say $\{\alpha_1,...,\alpha_m\}$, $\;m\geq 2$,
 such that all of the leaves have the same father, say $\beta$, with $d(\beta)=m+1$, then
 let $T'=T- \{\alpha_1,...,\alpha_m\}$. Note that $T'$ is a non star tree since
   neither $x$ nor any  of its neighbors is a father of at least two
 leaves. Hence, there exists a $(T',x)$-well 2-placement, say
 $\sigma_x$, such that $dist(\beta,\sigma_x(\beta))\leq 4$ since
 $\beta$ is an end vertex in $T'$. Then
 $\sigma=\sigma_x(\alpha_1...\;\alpha_m)$ is a
 $(T,x)$-well 2-placement. Otherwise, suppose that the set of leaves
 $\{\alpha_1,...,\alpha_m\}$ doesn't exist. Since $T$ is not a path and all the previous cases
 are not satisfied then  there exists $ y \in N(x)$ such that $T_{(x,y)}$ is a non star tree.
 There exists no neighbor $ y$ of $x$ such that $T_{(x,y)}$   and $T_{(y,x)}$ are non star
 trees, since otherwise there exists a $(T_{(x,y)},x)$-well 2-placement, and so we can apply Claim 1 on $x$.
If there exists $y \in N(x)$ such that $T_{(x,y)}$ is a non star
tree and $T_{(y,x)}$ is a path of length two, then  all the neighbor
trees of $x$ are neighbor $F$-trees. And since $T$ is not a path
then $d(x)>2$. Let $\{y_1,...,y_m\}$, $m\geq 2$, be the neighbors of
$x$ distinct from $y$, $T_0$ be the connected component containing
$x$ in $T-\{xy_i;\;i=1,...,m\}$. Then $T_0$ is a path of length
three and so there exists, by Lemma 3.1, a $(T_0,x)$-well
2-placement. Hence, we can apply Claim 2 on $x$, a contradiction.
Thus, there exists no neighbor $y$ of $x$ such that $T_{(x,y)}$ is a
non star tree and $T_{(y,x)}$ is a path of length two. If there
exists $y \in N(x)$ such that $T_{(x,y)}$ is a non star tree and
$T_{(y,x)}$ is a path of length one, then if $d(x)=2$, let $y_1$ be
the neighbor of $x$ distinct from $y$. $T_{(y_1,x)}$ is not a
 neighbor $F$-tree of $x$ since $T$ is not a path. Let
 $\{b_1,...,b_r\}$, $r\geq 1$, be the neighbors of $y_1$ distinct from
 $x$ and let $T_0$ be the connected component containing $x$ in
 $T-\{y_1b_i;\;i=1,..., r\}$. Then $T_0$ is a path of length three, and so
  there exists, by Lemma 3.1, a $(T_0,x)$-well path 2-placement $\sigma_0$ such
   that $dis(y,\sigma_0(y))\leq 3$.
  Thus, $y_1$ has a unique neighbor $F$-tree distinct from
  $T_{(x,y_1)}$, since otherwise we can apply either Claim 1 or Claim 2 on $y_1$. Suppose that $T_{(b_1,y_1)}$ is that tree and let $T'$
  be the connected component containing $x$ in
  $T-\{y_1b_i;\;i=2,..., r\}$. Hence, $T'$ is a path of length at least four and at most six,
   and so, by Lemma 3.1, there exists a $(T',x)$-well path
  2-placement, say $\sigma'$, such that $dist(y,\sigma'(y ))\leq 3$.
  Thus, we can apply Claim 1 on $y_1$, a contradiction. Hence, $d(x)>2$, and so all the neighbor trees of $x$
are
 paths of length one with at most one is a path of length zero.
  Hence, by Lemma 3.9, there exists a
 $(T,x)$-well 2-placement.
  Otherwise, suppose that there exists no $y\in N(x)$ such that $T_{(x,y)}$
is a non star tree and $T_{(y,x)}$ is a path of length one. Then,
there exists $y \in N(x)$ such that $T_{(x,y)}$ is a non star tree,
$T_{(y,x)}$ is a path of length zero and $d(x)=2$. Let
$N(x)=\{y,y'\}$. If $d(y')=2$, then let $a$ be the neighbor of $y'$
distinct from $x$ and let $\{a_1,...,a_k\},\;k\geq 1$, be the
neighbors of $a$ distinct from $y'$. Let $T_0$ be the connected
component containing $x$ in $T-\{aa_i:\;i=1,...,k\}$, then $T_0$ is
a path of length three, and so, by Lemma 3.1, there exists a
$(T_0,x)$-well path 2-placement $\sigma_0$ such that
$dist(a,\sigma_0(a))\leq 3 $. Thus, $a$ has a unique neighbor
$F$-tree distinct from
 $T_{(y',a)}$, since otherwise we can apply either Claim 1 or Claim 2 on $a$. Suppose that $T_{(a_1,a)}$ is that tree and let $T'$ be
 the connected component containing $x$ in $T-\{aa_i;\; i=2,...,k\}$. Then $T'$ is
 a path of length at least four and at most six,
 and so, by Lemma 3.1, there exists a $(T',x)$-well path 2-placement, say
 $\sigma'$, such that $dist(a,\sigma'(a))\leq 3$. Thus, we can  apply Claim 1 on $a$, a contradiction.
 Hence, $d(y')>2$.
 let $\{y'_1,..,y'_l\},\; l\geq 2$, be the neighbors of $y'$ distinct from $x$.
If $y'$ has a non neighbor $F$-tree then suppose that
$T_{(y'_1,y')}$ is that tree.  Note that $T_{(y',y'_1)}$ is a non
star tree and so there exists a $(T_{(y',y'_1)},x)$-well 2-placement
say $\sigma'$ such that $dist(y',\sigma'(y'))\leq 3$. Thus, Claim 1
is applied on $y'$, a contradiction.
  Hence, all the neighbor trees of $y'$ are
  neighbor $F$-trees. If $d(y')>3$, then let $T'$  be the connected component containing $x$ in
$T-\{y'y'_i:\; i=2,...,l\}$. Since $T'$ is a non star tree, then
there exists a $(T',x)$-well 2-placement and so we can apply Claim 2
on $y'$, a contradiction. Thus,  $d(y')=3$ and $T$ is isomorphic to
one of the trees in Fig. 2, and so, by Lemma 3.7, there exists a
$(T,x)$-well
  2-placement. $\square$ \\\\
\textbf{Proof of Corollary 1.3}.\\
Let $T_0=T -\{\alpha_1, ..., \alpha_{m_T}\}$, where $\{\alpha_1,
..., \alpha_{m_T} \}$ is the maximal set of leaves that can be
removed from $T$ in such a way that the obtained tree is a non star
one. Since $T_0$ is a non star tree then there exists a $(T_0,
x)$-well 2-placement for some $x$ in $T_0$. We define a packing of
$T$ into $T^6$, say $\sigma$, as follows:\begin{center}
 $\sigma(v)=\begin{cases}
 \sigma'& \text { if $v\in V(T')$}\\
 \alpha_i & \text { if $v=\alpha_i$ for $i=1,..., m_T$}\\
 \end{cases}$
 \end{center}
 Label $\alpha_i$ by $i$, for $i=1,..., m_T$. Let $r$ be the number of
cycles of $\sigma$ and let $\sigma_1,...,\sigma_r$ be those cycles.
Remark that $r\geq \lceil \frac{n-m_t}{5}\rceil$. Label the vertices
of each cycle $\sigma_i$ by $m_T+i$ for $i=1,...,r$. Hence, we
obtain an $(m_T+r)$-labeled packing of $T$ into $T^6$ and so
$w^6(T)\geq
m_T+\lceil \frac{n-m_t}{5}\rceil$. $\square$\\\\
\textbf{Proof of Theorem 1.8}.\\ The proof is by induction on the
order of $T$. For $n=4$ there is only one non star tree, which is
$P_4$. By lemma 3.2, there exists  a $(T,v)$-good 2-placement for
every $ v \in V(P_4)$. Suppose that the theorem holds for $n'<n$,
$n\geq 5$, and let $T$ be a non star tree of order $n$. Let $x$ be a
vertex of $T$ such that $x$ is not  a bad vertex. If $T$ is a path,
then, by Theorem 3.2, there exists a $(T,x)$-good path 2-placement.
Else, we
will study two cases concerning the degree of $x$:\\\\
\textbf{Case 1.} $d(x)=1$.\\
 Let $y$ be the father of $x$. If $y$
is a father of another leaf $\alpha$, then let $T' = T-\{ x\}$. Note
that $T'$ is a non star tree since otherwise $T$ is a star. Hence,
there exists a $(T', \alpha)$-good 2-placement, say $\sigma'$. The
2-placement $\sigma$
 defined as follows:
\begin{center}
$\sigma(v)=\begin{cases}
 \sigma'(v) &\text{if $v\in V(T')-\{\alpha\}$}\\
 \sigma'(\alpha) & \text {if $v=x$}\\
 x & \text{if $v=\alpha$}\\
 \end{cases}$
 \end{center} is a $(T,x)$-good 2-placement.
 Otherwise,
suppose that $y$ is the father of the leaf $x$ only. If there exists
a set of leaves, say $\{\alpha_1, ...,\alpha_m\}, \; m\geq 2$, such
that all of these leaves have the same father, say $\beta$, with
$d(\beta) = m + 1$, then let $T' = T-\{\alpha_1, ..., \alpha_m\}$.
If $T'$ is a non star tree, then there exists a $(T',x)$-good
2-placement, say $\sigma_x$, such that
$dist(\beta,\sigma_x(\beta))\leq 4$ since $\beta$ is a leaf in $T'$.
Hence, $\sigma=\sigma_x(\alpha_1...\;\alpha_m)$ is a $(T,x)$-good
2-placement. And if $T'$ is a star then $T$ is isomorphic to $T_A$
or $T_C$ in Fig. 1, and so, by Lemma 3.6, there exists a $(T,
x)$-good 2-placement. Otherwise, suppose that the set $\{\alpha_1,
..., \alpha_m\} $ doesn't exist.  Hence, we can remark that for any
edge $ab$ in $E(T)$, $T_{(a,b)}$ can be either a neighbor $F$-tree
of $b$ or a non star tree. Let $z$ be the first vertex away from $x$
such that $d(z) > 2$. $z$ exists since $T$ is not a path. Let $z'$
be the nearest neighbor of $z$ to $x$ and let $\{z_1, ..., z_m\}$,
$m\geq 2$, be the neighbors of $z$ distinct from $z'$. Note that
$T_{(z',z)}$ is a path and $T_{(z,z')}$ is a non star tree, since
otherwise the set of leaves described above exists. If $T_{(z',z)}$
is a path of length at least three, then let $\sigma_x$ be  a
$(T_{(z',z)},x)$-good path 2-placement and let $\sigma_z$ be a
$(T_{(z,z')},z)$-good 2-placement if $z$ is not a bad vertex in
$T_{(z,z')}$, and if it is, then let $\sigma_z$ be a
$(T_{(z,z')},z'')$-good 2-placement, where $z''$ is a neighbor of
$z$ in $T_{(z,z')}$. Since $dist(z',\sigma_x(z'))\leq 2$, then the
2-placement $\sigma$ defined as follows:
 \begin{center}
 $\sigma(v)=\begin{cases}
 \sigma_x(v) & \text { if $v\in V(T_{(z',z)})$}\\
 \sigma_z(v) & \text { if $v\in V(T_{(z,z')})$}\\
 \end{cases}$
 \end{center} is a $(T,x)$-good 2-placement. Otherwise, $T_{(z',z)}$
 is a path of length at most two. If
$T_{(z',z)}$ is a path of length two, then let $\sigma'$ be a
$(T_{(z',y)},z')$-good 2-placement.
  The 2-placement $\sigma$  defined as follows:
\begin{center}
 $\sigma(v)=\begin{cases}
 \sigma'(v) & \text { if $v\in V(T)-\{x,y,z'\}$}\\
 x & \text { if $v=z'$}\\
 y & \text { if $v=x$}\\
 \sigma'(z') & \text { if $v=y$}\\
 \end{cases}$
 \end{center} is a $(T,x)$-good 2-placement.
Else, if
 $T_{(z',z)}$ is a path of length one,
 that is, $x$ is a neighbor of $z'$, then if $z$ is not a bad vertex in $T_{(z,z')}$,  let $\sigma'$ be a $(T_{(z,z')},z)$-good
 2-placement. The 2-placement  $\sigma$  defined as follows:
\begin{center}
 $\sigma(v)=\begin{cases}
 \sigma'(v) & \text { if $v\in V(T)-\{x,z,z'\}$}\\
 x & \text { if $v=z$}\\
 z' & \text { if $v=x$}\\
 \sigma'(z) & \text { if $v=z'$}\\
 \end{cases}$
 \end{center} is a $(T,x)$-good 2-placement.
And if $z$ is a bad vertex in $T_{(z,z')}$ then $T$ is isomorphic
 to $T_B$ in Fig. 6, and so, by Lemma 3.12, there exists a $(T,x)$-good
 2-placement.
 Otherwise,
  $T_{(z',z)}$ is a path of length zero, that is, $x$ is a neighbor
  of $z$. If there exists a neighbor of $z$, say $z_1$, such that
$T_{(z_1,z)}$ is not a neighbor $F$-tree of $z$, then let
$\sigma_{z_1}$ be a $(T_{(z_1,z)},z_1)$-good 2-placement if $z_1$ is
not a bad vertex in $T_{(z_1,z)}$, and if it is, then let
$\sigma_{z_1}$ be a $(T_{(z_1,z)},w_1)$-good 2-placement, where
$w_1$ is a neighbor of $z_1$ in $T_{(z_1,z)}$.  $T_{(z,z_1)}$ is a
non star tree since $z$ is the father of the leaf $x$ only, then
there exists a $(T_{(z,z_1)},x)$-good 2-placement, say $\sigma_x$,
such that $dist(z,\sigma_x(z))\leq 2$. We define a $(T,x)$-good
2-placement $\sigma$ as follows:
\begin{center}
 $\sigma(v)=\begin{cases}
 \sigma_{z_1}(v) & \text { if $v\in V(T_{(z_1,z)})$}\\
 \sigma_x(v) & \text { if $v\in V(T_{(z,z_1)})$}\\
 \end{cases}$
 \end{center}
 Otherwise, suppose that all  the neighbor trees of $z$ are neighbor $F$-trees.
 Then, by Lemma 3.10, there exists a $(T,x)$-good
2-placement.\\\\
\textbf{Case 2.} $d(x)>1$.\\
If $x$ or any of its neighbors is a father of at least two leaves
distinct from $x$, say $\alpha_1$ and $\alpha_2$, then let
$T'=T-\{\alpha_1,\alpha_2\}$. If $T'$ is a non star tree such that
$x$ is not a bad vertex in $T'$ then let $\sigma_x$ be a
$(T',x)$-good 2-placement. Then
$\sigma=\sigma_x(\alpha_1\;\alpha_2)$ is a $(T,x)$-good 2-placement.
 If $x$ is a bad vertex in $T'$ then $T$ is isomorphic to $T_A$ or $T_C$ in Fig. 5, and so, by Lemma 3.11, there exists a
 $(T,x)$-good 2-placement. And if $T'$ is a star then $T$ is isomorphic either to $T_B$ in Fig. 5 or
 to $T_A$ in Fig. 6, and so, by Lemma 3.11 and Lemma 3.12, there exists a $(T,x)$-good 2-placement.
 Otherwise,  $x$ and each of its neighbors is the father of at most one leaf.
  If there exists a set of leaves, say $\{\alpha_1,...,\alpha_m\}$, $m\geq
2$, such that all of the leaves have the same father, say $\beta$,
with $d(\beta)=m+1$,  then let $T'=T- \{\alpha_1,...,\alpha_m\}$.
Note that $T'$ is a non star tree since neither $x$ nor any of its
neighbors is a father of at least two leaves. If  $x$ is not a bad
vertex in $T'$, then there exists a $(T',x)$-good 2-placement, say
$\sigma_x$, such that $dist(\beta,\sigma_x(\beta))\leq 4$ since
$\beta$ is a leaf in $T'$. Thus
$\sigma=\sigma_x(\alpha_1...\;\alpha_m)$ is a $(T,x)$-good
2-placement.
 And if $x$ is a bad vertex in $T'$, then $T$ is isomorphic to  $T_D$ in Fig.
 5,
  and so, by Lemma 3.11, there exists a $(T,x)$-good 2-placement.
 Otherwise, suppose that  the set of leaves  $\{\alpha_1,...,\alpha_m\}$
doesn't exist in $T$. Since $T$ is not a path, neither $x$ nor any
of its neighbors is the father of at least two leaves and the set of
leaves having the same father doesn't exist, then there exists $y\in
N(x)$ such that $T_{(x,y)}$ is a non star tree. Whenever $x$ is a
bad vertex in $T_{(x,y)}$,  let $x_1$ and
 $x_2$ be the neighbors of $x$ in $T_{(x,y)}$ and $y_1$ and $y_2$ be
 that of $x_1$ and $x_2$, respectively. If there exists $ y \in N(x)$ such that $T_{(x,y)}$  and $T_{(y,x)}$ are non star
 trees, then let $\sigma_y$  be a $(T_{(y,x)},y)$-good 2-placement
 if $y$ is not a bad vertex in $T_{(y,x)}$, and   if it is, then let $\sigma_y$ be a $(T_{(y,x)},y')$-good
 2-placement,
  where $y'$ is a neighbor of $y$ in $T_{(y,x)}$. If $x$ is not a bad vertex in $T_{(x,y)}$, then let
$\sigma_x$ be a $(T_{(x,y)},x)$-good 2-placement. The 2-placement
$\sigma$  defined as follows:
 \begin{center}
 $\sigma(v)=\begin{cases}
 \sigma_x(v) & \text { if $v\in V(T_{(x,y)})$}\\
 \sigma_y(v) & \text { if $v\in V(T_{(y,x)})$}\\
 \end{cases}$
 \end{center} is a $(T,x)$-good 2-placement.
 And if $x$ is a bad vertex in $T_{(x,y)}$, then let $T'$ be the
 connected component containing $x$ in
 $T-\{xx_1,xx_2\}$ and let $\sigma_x$ be a $(T',x)$-good
 2-placement. Thus $\sigma=\sigma_x(x_2\;y_2\;x_1\;y_1)$ is a
 $(T,x)$-good 2-placement.
 Otherwise, if   there exists $ y \in N(x)$ such that $T_{(x,y)}$ is a non star tree and $T_{(y,x)}$ is a path of length zero, then
if $x$ is not a bad vertex in $T_{(x,y)}$,  let $\sigma_x$ be a
$(T_{(x,y)},x)$-good 2-placement.
 The 2-placement $\sigma$ defined such that:
 \begin{center}
 $\sigma(v)=\begin{cases}
 \sigma_x(v) & \text { if $v\in V(T_{(x,y)})-\{x\}$}\\
 y & \text { if $v=x$}\\
 \sigma_x(x)& \text { if $v=y$}\\
 \end{cases}$
 \end{center}is a $(T,x)$-good 2-placement.
And if $x$ is a bad vertex in $T_{(x,y)}$ then
$\sigma=(x\;y\;x_1\;y_1\;x_2\;y_2)$ is a $(T,x)$-good 2-placement.
Else, if  there exists $ y \in N(x)$ such that $T_{(x,y)}$ is a non
star tree and $T_{(y,x)}$ is a path of length two, then if $x$ is
not a bad vertex in $T_{(x,y)}$, let $\sigma_x$ be a
$(T_{(x,y)},x)$-good 2-placement and let $T_{(y,x)}=yzw$. The
2-placement $\sigma$ defined such that:
 \begin{center}
 $\sigma(v)=\begin{cases}
 \sigma_x(v) & \text { if $v\in V(T_{(x,y)})-\{x\}$}\\
 y & \text { if $v=x$}\\
 \sigma_x(x)& \text { if $v=z$}\\
 w & \text { if $v=y$}\\
 z & \text { if $v=w$}\\
 \end{cases}$
 \end{center} is a $(T,x)$-good 2-placement.
And  if $x$ is a bad vertex in $T_{(x,y)}$, then
$\sigma=(x\;y\;w\;z)(x_1\;y_1\;x_2\;y_2)$ is a $(T,x)$-good
2-placement. Else,   there exists $ y \in N(x)$ such that
$T_{(x,y)}$ is a non star tree and  $T_{(y,x)}$ is a path of length
one. If $d(x)=2$, then let $N(x)=\{y_1,y_2\}$. Suppose that
$T_{(y_1,x)}=y_1x_1$ and $T_{(x,y_1)}$ is a non star tree.
 Let $\{a_1,...,a_m\}$, $m\geq 1$, be the neighbors of $y_2$ distinct from $x$.
 If $y_2$ has a non neighbor $F$-tree, then suppose that $T_{(a_1,y_2)}$
 is that tree and
 let $\sigma_{a_1}$ be a $(T_{(a_1,y_2)},a_1)$-good 2-placement if $a_1$ is not a bad vertex in
 $T_{(a_1,y_2)}$, and if it is, then let $\sigma_{a_1}$ be a $(T_{(a_1,y_2)},b)$-good 2-placement, where $b$ is a neighbor
 of $a_1$ distinct from $y_2$. If $x$ is not a bad vertex in $T_{(y_2,a_1)}$, then let $\sigma_x$ be a $(T_{(y_2,a_1)},x)$-good 2-placement.
 Finally, the 2-placement $\sigma$  defined as follows:
 \begin{center}
 $\sigma(v) = \begin{cases}
  \sigma_x(v) & \text{if $v\in V(T_{(y_2,a_1)})$} \\
 \sigma_{a_1}(v) & \text{if $v\in V(T_{(a_1,y_2)})$}\\
\end{cases}$
\end{center} is a $(T,x)$-good
2-placement. If $x$ is a bad vertex in $T{(y_2,a_1)}$, then $y_2$
has only two neighbors distinct from $x$ such that $T_{(a_2,y_2)}$
is the vertex $a_2$. Let $\{b_1,...,b_l\}$, $l\geq 1$, be the
neighbors of $a_1$ distinct from $y_2$, $T'$ be the connected
component containing $x$ in $T-\{a_1b_i;\; i=1,...,l\}$ and let
$\sigma'=(x_1\;x\;y_2\;y_1)(a_1\;a_2)$. Whenever $T_{(b_i,a_1)}$,
$1\leq i\leq m$, is not a neighbor $F$-tree of $a_1$, let $\sigma_i$
be a $(T_{(b_i,a_1)}, b_i)$-good 2-placement if $b_i$ is not a bad
vertex in $T_{(b_i,a_1)}$, and if it is, then let $\sigma_i$ be a
$(T_{(b_i,a_1)},w_i)$-good 2-placement, where $w_i$ is a neighbor of
$b_i$ in $T_{(b_i,a_1)}$. If all the neighbor trees of $a_1$ are non
neighbor $F$-trees, then the
 2-placement $\sigma$  defined as follows:
 \begin{center}
 $\sigma(v) = \begin{cases}
  \sigma'(v) & \text{if $v\in V(T')$} \\
 \sigma_{i}(v) & \text{if $v\in V(T_{(b_i,a_1)})$ for$\; i=1,...,l$}\\
\end{cases}$
\end{center} is a $(T,x)$-good
2-placement. If $a_1$ has at least two neighbor $F$-trees, then
suppose that $T_{(b_i,a_1)}$ is a  neighbor $F$-tree of $a_1$ for
$i=1,...,p$, where $2\leq p\leq l$. By Corollary 3.1, there exists a
$(T'',x)$-good 2-placement $\sigma''$ such that
$\sigma''(v)=\sigma'(v)$ for every $ v$ of $T'$, where $T''$ is the
connected component containing $x$ in $T-\{a_1b_i;\; i=p+1,...,l\}$.
If $T''=T$, then $\sigma''$ is a $(T,x)$-good 2-placement.
Otherwise, a $(T,x)$-good 2-placement $\sigma$ is defined a s
follows:
 \begin{center}
 $\sigma(v) = \begin{cases}
  \sigma''(v) & \text{if $v\in V(T'')$} \\
 \sigma_{i}(v) & \text{if $v\in V(T_{(b_i,a_1)})$ for$\; i=p+1,...,l$}\\
\end{cases}$
\end{center}
Finally, if $a_1$ has a unique neighbor $F$-tree, then suppose that
$T_{(b_1,a_1)}$ is that tree. Let $T'$ be the connected component
containing $x$ in $T-\{a_1b_i;\; i=2,...,l\}$, then $T'$ is
isomorphic to one of the trees, $T_E$, $T_F$ or $T_G$, in Fig. 5,
and so, by Lemma 3.11, there exists a $(T,x)$-good 2-placement
$\sigma'$ such that $dist(a_1,\sigma'(a_1))\leq 2$. If $l=1$, then
$\sigma'$ is a $(T,x)$-good 2-placement. Else, a $(T,x)$-good
2-placement $\sigma$ is defined as follows:
 \begin{center}
 $\sigma(v) = \begin{cases}
  \sigma'(v) & \text{if $v\in V(T')$} \\
 \sigma_{i}(v) & \text{if $v\in V(T_{(b_i,a_1)})$ for$\;i=2,...,l$}\\
\end{cases}$
\end{center}
Now, suppose that all the neighbor trees of $y_2$ are neighbor
$F$-trees, then $d(y_2)\geq 3$ since $T$ is not a path. Let $T_0$ be
the connected component containing $x$ in $T- \{y_2a_i;\;i=1,...,
m\}$ and $\sigma_0=(x\;y_2\;y_1\;x_1)$. Then, by Corollary 3.1,
there exists a $(T,x)$-good 2-placement since
$dist(y_2,\sigma_0(y_2))=2$ and all the neighbor  trees of $y_2$
 are neighbor $F$-trees.
 Finally, if $d(x)>2$, then each neighbor tree of $x$ is a path of length one. Let $N(x)=\{y_1,...,y_r\}$, $r> 2$,
  and let $T_{(y_i,x)}=y_i\;x_i,$ for $i=1,..., r$. If $d(x)>4$,
  then let $T'$ be the connected component containing $x$ in
 $T-\{xy_i; i=4,..., r\}$. Since $T'$ is a non star tree
 and $x$ is not a bad vertex in $T'$, then there exists a
 $(T,x)$-good 2-placement. Since $T_{(y_i,x)}$ are neighbor $F$-trees of $x$ for $i=4,...,r$, then, by Corollary 3.1,
 there exists a $(T',x)$-good 2-placement. Otherwise,  that is, $d(x)<5$,
 then $\sigma=(x\;y_3\;y_2\;x_2\;x_3\;y_1\;x_1)$ is a $(T,x)$-good
 2-placement if $d(x)=3$, and $\sigma'=(x\;y_3\;y_2\;x_2\;x_3\;y_4\;x_4\;y_1\;x_1)$ is a $(T,x)$-good
 2-placement if $d(x)=4$. $\square$\\\\
\textbf{Proof of Corollary 1.4}.\\ Let $T'=T-
\{\alpha_1,...,\alpha_{m_T}\}$,  where
$\{\alpha_1,...,\alpha_{m_T}\}$ is the maximal set of leaves that
can be removed from $T$ in such a way that the obtained tree is a
non star one. Since $T'$ is a non star tree, then there exists a
$(T',x)$-good 2-placement, say $\sigma_x$, where $x$ is any  non bad
vertex of $T'$. We define a packing $\sigma$ of $T$ into $T^5$  as
follows:
\begin{center}
 $\sigma(v) = \begin{cases}
  \sigma_x(v) & \text{if $v\in V(T')$} \\
 \alpha_i & \text{if $v=\alpha_i$ for $ i=1,... m_T$}\\
\end{cases}$
\end{center}
Label $\alpha_i$ by $i$ for $i=1,..., m_T$ and label all the
vertices in $T'$ by $m_T+1$. Hence, we obtain an $(m_T+1)$-labeled
packing of $T$ into $T^5$, and so $w^5(T)\geq m_T+1$. $\square$

 
 \end{document}